\documentclass[3p]{elsarticle}
\usepackage{hyperref}








\bibliographystyle{model5-names}\biboptions{authoryear}

\usepackage{mathrsfs}
\usepackage{amsfonts, amssymb, amsmath, amsthm, esint}
\usepackage{lipsum}
\usepackage{multirow}
\usepackage{latexsym}
\usepackage{latexsym}
\usepackage{tikz}
\usepackage{pgfplots}
\usepackage{natbib}
\usepackage{subcaption}
\usepackage{caption}


\newtheorem{proposition}{Proposition}
\newdefinition{definition}{Definition}
\captionsetup[figure]{labelfont={default},labelformat={default},labelsep=period,name={Fig.}}


\begin{document}

\begin{frontmatter}

\title{A branch-and-price algorithm for the robust single-source capacitated facility location problem under demand uncertainty}

\author{Jaehyeon Ryu\fnref{myfootnote}}
\ead{jhryu034@gmail.com}
\author{Sungsoo Park\fnref{myfootnote}\corref{mycorrespondingauthor}}
\cortext[mycorrespondingauthor]{Corresponding author. Tel +82 42 350 3121}
\ead{sspark@kaist.ac.kr}
\address{Department of Industrial and Systems Engineering, KAIST, 291 Daehak-ro, Yuseong-gu, Daejeon 34141, Republic of Korea}
\fntext[myfootnote]{Korea Advanced Institute of Science and Technology, Daejeon, the Republic of Korea}

\begin{abstract}
We consider the robust single-source capacitated facility location problem with uncertainty in customer demands. A cardinality-constrained uncertainty set is assumed for the robust problem. To solve it efficiently, we propose an allocation-based formulation derived by Dantzig-Wolfe decomposition and a branch-and-price algorithm. The computational experiments show that our branch-and-price algorithm outperforms CPLEX in many cases, which solves the ordinary robust reformulation. We also examine the trade-off relationship between the empirical probability of infeasibility and the additional costs incurred and observe that the robustness of solutions can be improved significantly with small additional costs. 
\end{abstract}

\begin{keyword}
robust optimization \sep single source capacitated facility location problem \sep demand uncertainty  \sep Dantzig-Wolfe decomposition \sep column generation \sep branch-and-price algorithm 
\end{keyword}

\end{frontmatter}


\section{Introduction} \label{section1} 

Facility location problem is one of the important combinatorial optimization problems arising in telecommunication, production-distribution systems, transportation, and many other industrial fields. The problem involves determining optimal locations of facilities and assignments of customers to the facilities with the least cost. A number of variations of the problem and their solution methods have been considered in the literature. Recently, facility location problems under parameter uncertainty have also been addressed and solved by many researchers.   

We consider the single-source capacitated facility location problem (SSCFLP). In this problem, it is only allowed that each customer must be assigned to exactly one facility. Additionally, each facility has a capacity restriction so that it can serve a set of customers as long as the total demand of the assigned customers is within the capacity limit. The objective is to minimize the overall costs of opening the facilities and the assignment of customers to facilities.

The SSCFLP is strongly NP-hard \citep{Cornuejols91, Gadegaard18}, which means there exist neither a pseudo-polynomial time algorithm nor a fully polynomial-time approximation scheme to solve it efficiently unless P=NP. Polynomial-time reduction from the uncapacitated facility location problem \citep{Cornuejols91} or the node cover problem \citep{Gadegaard18} shows this negative theoretical result. However, many algorithms for the SSCFLP have been proposed, which usually fall into one of the categories of Lagrangian relaxation based-algorithms, heuristic algorithms, and branch-and-bound-based exact algorithms.

Lagrangian relaxation has been used, combined with branch-and-bound or heuristics, to obtain lower bounds on the optimal value. \citet{Klincewicz86} proposed Lagrangian relaxation whose relaxed problems are uncapacitated facility location problems by dualizing the capacity constraints, and feasible solutions were obtained by Lagrangian heuristics. \citet{Barcelo84} presented a two-stage algorithm, whose first stage determines facility locations based on Lagrangian relaxation with the single-sourcing constraints relaxed and second stage solves generalized assignment problems. \citet{Pirkul87} and  \citet{Sridharan93} also relaxed the single-sourcing constraints for their Lagrangian relaxation-based algorithm, whose subproblems are binary knapsack problems. \citet{Beasley93} proposed Lagrangian relaxation, dualizing both of the capacity constraints and single-sourcing constraints. \citet{Hindi99} applied greedy heuristics using Lagrangian relaxation and restricted neighborhood search to find solutions of the large-scale SSCFLP. 

Heuristics have also been proposed to obtain high-quality feasible solutions of the SSCFLP in a short time. \citet{Ronnqvist99} identified feasible solutions from a repeated-matching algorithm based on three sets of closed facilities, unassigned customers, and pairs of each assigned customer and her facility, respectively. \citet{Delmaire99} presented a hybrid heuristic algorithm with a greedy randomized adaptive search procedure (GRASP) and tabu search for the SSCFLP. \citet{Cortinhal03} incorporated tabu search into the procedures of Lagrangian heuristics. \citet{Ahuja04} improved multi-exchange heuristics by exchanging the set of customers assigned to each facility. Furthermore, ant colony optimization by \citet{Chen08}, scatter search by \citet{Contreras08}, and kernel search by \citet{Guastaroba14} are proposed for the large-scale SSCFLP.

There have also been studies to solve the SSCFLP exactly by applying branch-and-bound-based algorithms. \citet{Neebe83} formulated the SSCFLP as a set partitioning problem and solved its linear programming relaxation (LP-relaxation) by a column generation approach at each node of the search tree. \citet{Holmberg99} utilized Lagrangian relaxation, relaxing single-sourcing constraints, to obtain lower bounds in the branch-and-bound algorithm. \citet{Diaz02} applied a branch-and-price algorithm with two levels of nodes; The first level nodes, children of the root node, are corresponding to the selection of opened facilities, and the second level nodes, children of the first level nodes, are corresponding to the allocation of customers, respectively. Recently, \citet{Yang12} proposed a modified branch-and-cut algorithm with lifted cover inequalities and Fenchel cutting planes to solve the SSCFLP. Their branching scheme generates a pair of nodes at each level; one involves a small-sized sparse problem with some variables fixed to zero to get feasible solutions, and the other contains a dense problem to obtain lower bounds. \citet{Gadegaard18} improved the algorithm of \citet{Yang12} in terms of cut generations and local branching strategies.  

Meanwhile, there has been much effort to handle facility location problems under uncertainty of parameters such as costs, demands, etc. It has been observed that an optimal solution to a deterministic problem can be inefficient or even infeasible to small changes of problem data \citep{Bental00}. Stochastic programming and robust optimization are two important approaches that have been used to handle parameter uncertainty. 

Stochastic programming is based on the assumption that there are certain probability distributions of all or some of the parameters, which are known in advance. To introduce the overview of the models and solution algorithms for the stochastic facility location problems, we refer to \citet{Owen98} and \citet{Snyder06}. One of the interesting cases of the stochastic facility location problems is demand uncertainty. It has been described using random variables, and therefore, capacity constraints now can be defined as chance-constraints with a probability level. Each chance-constraint states that the probability of the total demand of the customers assigned to a facility exceeding its capacity is less than a specified probability level. \citet{Laporte94} introduced chance-constraints to the capacitated facility location problem with stochastic customer demands. They formulated it as a mixed-integer programming (MIP) problem and solved it by a branch-and-cut algorithm. \citet{Beraldi04} assumed that the demands of emergency medical services follow the Poisson distribution, and they formulated the problem as a stochastic integer programming model with chance-constraints. \citet{Lin09} assumed that the distribution of customer demands of the SSCFLP can be Poisson or normal and defined the capacity restrictions as chance-constraints, which can be formulated as a mixed-integer nonlinear programming problem for the case of normally distributed demand uncertainty.

The additional costs to the objective function incurred by excessive demands at each facility are also considered for the SSCFLP. \citet{Albareda11} provided a formulation of the stochastic SSCFLP with a restriction on the number of assigned customers to each facility when each customer demand, restricted by whether it is necessary or not, follows the Bernoulli distribution. They added the expected value of additional costs, which can occur by reassigning customers to another facility, to the objective function. Then, it can be formulated as an MIP problem, and they solved the instances with at most 20 candidates of facilities and 60 customers by CPLEX. \citet{Bieniek15} extended the assumption on the distribution of demands to arbitrary discrete, continuous, or mixed distributions. The paper includes theoretical results for general distribution and computational experiments for a small instance with four facilities and twelve customers whose demands have exponential or Poisson distribution.   

However, these stochastic programming approaches have some limitations. First, exact distributions of parameters are required for a stochastic programming formulation, but it is not easy to know the true distributions of the parameters practically.  Moreover, even if the probability distributions can be assumed precisely, an optimal solution of the problem often cannot be obtained exactly and effectively by the existing methods. Such difficulty usually comes from non-linearity, sometimes non-convexity of the stochastic objective function, and the chance-constraints in the stochastic MIP problems. 

Robust optimization can be an alternative approach for incorporating the uncertainty of parameters into optimization problems. An uncertainty set, instead of probabilistic information, is used to represent the range of parameter changes for robust optimization problems. For example, there are uncertainty sets such as simple interval uncertainty set \citep{Soyster73}, ellipsoidal uncertainty set \citep{Bental98,Bental00}, and cardinality-constrained uncertainty set \citep{Bertsimas03,Bertsimas04} have been considered. 

There have been several results for robust facility location problems such as \citet{Snyder06_2} and \citet{Gulpinar13}. Moreover, we refer to \citet{Baron11} for a comprehensive review of the robust facility location problems. However, to the best of our knowledge, there has been little previous research for the robust optimization approach for the SSCFLP with demand uncertainty. Recently, \citet{Baron19} proposed the almost robust optimization approach for it. This scenario-based, soft-constrained robust optimization framework allows a solution having a few infeasible scenarios. Their proposed decomposition algorithm could solve instances with at most 25 candidates of facilities and 50 customers in about five minutes. 

Like the deterministic SSCFLP, its robust counterpart also can be reformulated using the Dantzig-Wolfe decomposition, as we suggest in this paper. Because the resulting reformulation has exponentially many variables, it cannot be solved directly.  Column generation and branch-and-price method can be used to solve such a problem. They have been used successfully to solve many difficult combinatorial optimization problems with many variables. We refer to \citet{Barnhart98}, \citet{Desrosiers10}, and \citet{Gamrath10} for further details of the branch-and-price algorithm. 

There have been many successful trials to solve large-scale MIP problems using the branch-and-price algorithm. \citet{Savelsbergh97} solved the generalized assignment problem using a branch-and-price algorithm. \citet{Diaz02} applied a branch-and-price algorithm to solve the deterministic SSCFLP. \citet{Ceselli05} used a branch-and-price algorithm to solve the capacitated p-median problem, which is one of the location problems having the same capacity restrictions and single-source restrictions like the SSCFLP. \citet{Klose07} solved the capacitated facility location problem without the single-source constraints using a branch-and-price algorithm. \citet{Lee12} proposed a branch-and-price algorithm for the robust network design problem without flow bifurcations using the cardinality-constrained uncertainty set for demands. Moreover,  \citet{Lee12_1} also presented a branch-and-price-and-cut algorithm for the robust vehicle routing problem with travel time and demand uncertainty.   

In this paper, we consider the SSCFLP with demand uncertainty using a robust optimization perspective. We assume that the demand of each customer belongs to a specified interval uncertainty set. The cardinality-constrained uncertainty set \citep{Bertsimas03,Bertsimas04} is used to describe the demand uncertainty of the robust SSCFLP. This uncertainty set is less conservative than the simple interval uncertainty set \citep{Soyster73}), and linearity of the formulation can be preserved, unlike the ellipsoidal uncertainty set \citep{Bental98,Bental00}. 

After reformulating the problem using the Dantzig-Wolfe decomposition, we propose a branch-and-price algorithm to solve the robust SSCFLP. We will show how the uncertainty of demands can be isolated into the subproblem in the column generation procedure. Therefore, overall optimization is not affected by the uncertainty of demands. We also consider branching schemes, variable fixing, and early termination to improve the performance of the algorithm. Computational experiments show that the algorithm can solve the robust SSCFLP fast compared to the traditional reformulation approach. Moreover, we make observations by simulation that the robustness of the solutions is improved by incorporating demand uncertainty.

The rest of the paper is organized as follows. In section \ref{section2}, we consider the traditional reformulation of the robust SSCFLP and the Dantzig-Wolfe decomposition-based reformulation. Section \ref{section3} explains the technical details of the branch-and-price algorithm to solve the reformulation. Section \ref{section4} gives computational results of our branch-and-price algorithm compared to the traditional MIP reformulation. Section \ref{section5}  presents the result of the Monte Carlo simulation to show the robustness of the obtained solutions. Finally, Section \ref{section6} summarizes the result of our research.

\section{Formulations of the robust SSCFLP} \label{section2} 

In this section, we introduce MIP formulations of the robust SSCFLP with a cardinality-constrained uncertainty set for demands. We also present an allocation-based formulation that can be obtained using the Dantzig-Wolfe decomposition.   

\subsection{Robust SSCFLP with cardinality-constrained demand uncertainty} \label{section21} 

We first introduce notation as follows. Let $\mathrm{M} = \{1,\cdots,m\}$ be a set of candidate facility locations and $\mathrm{N}=\{1,\cdots,n\}$ be a set of customers. Let $f_{i}$ be the set-up cost of opening facility and $s_{i}$ be the capacity of the facility at location $i \in \mathrm{M}$. Let $d_{j}$ be the demand of customer $j \in \mathrm{N}$ and $c_{ij}$ be the allocation cost of assigning customer $j \in \mathrm{N}$ to facility $i \in \mathrm{M}$. Without loss of generality, we assume that these parameters are nonnegative integers.

Then, we can formulate the SSCFLP as follows:
\begin{align}
\textrm{\textbf{(P)}} \ \ \  \textrm{minimize} \ \ \ & \sum_{i \in \mathrm{M}} \sum_{j \in \mathrm{N}} c_{ij}x_{ij} + \sum_{i \in \mathrm{M}} f_{i} y_{i}  \label{eq01} \\
\textrm{subject to} \ \ \ & \sum_{j \in \mathrm{N}} d_{j}x_{ij}  \leq  s_{i}y_{i}, \ \forall i \in \mathrm{M}, \label{eq02} \\
& \sum_{i \in \mathrm{M}} x_{ij} = 1, \ \forall j \in \mathrm{N}, \label{eq03} \\
& x_{ij}  \leq  y_{i}, \ \forall i \in \mathrm{M}, j \in \mathrm{N}, \label{eq04}\\
& x_{ij} \in \{0,1\}, \ \forall i \in \mathrm{M}, j \in \mathrm{N}, \label{eq05}\\
& y_{i} \in \{0,1\}, \ \forall i \in \mathrm{M}, \label{eq06} 
\end{align}
\noindent
where the binary variable $x_{ij}$ is equal to one if customer $j \in \mathrm{N}$ is served by facility $i \in \mathrm{M}$ and zero otherwise, and the binary variable $y_{i}$ is equal to one if facility $i \in \mathrm{M}$ is opened, and zero otherwise. The objective function (\ref{eq01}) minimizes the sum of total fixed costs and total assignment costs. Constraints (\ref{eq02}) ensure that the total demand of the customers assigned to a facility should not exceed the capacity of the facility.  Constraints (\ref{eq03}) ensure that each customer must be assigned to exactly one facility. Constraints (\ref{eq04}) are redundant, but the lower bound obtained by the LP-relaxation of \textbf{(P)} can be strengthened by adding these constraints.

Now, we formulate the robust SSCFLP under demand uncertainty. A cardinality-constrained uncertainty set \citep{Bertsimas03,Bertsimas04} can be used to describe demand uncertainty. We assume that the demand of each customer $j \in \mathrm{N}$ takes a value in the interval $[d_{j}-b_{j},d_{j}+b_{j}]$, where $d_{j}$ is a nominal demand and $b_{j}$ is a value of the maximum deviation from $d_{j}$. Moreover,  Let $\Gamma_{i}$ be a nonnegative integer parameter that controls the degree of robustness of a solution for each facility $i \in \mathrm{M}$. It restricts the number of uncertain ones among the demands assigned to each facility. In other words, at most $\Gamma_{i}$ demands have their maximum values $d_{j}+b_{j}$ and the others have nominal values $d_{j}$, among the demands assigned to facility $i \in \mathrm{M}$ in the worst-case scenario. Then, the cardinality-constrained uncertainty set \citep{Bertsimas03,Bertsimas04} is defined as $U_{d}^{i} := \{  \bar{d} \in \mathbb{R}_{+}^{n} | \bar{d}_{j}={d}_{j}+{b}_{j} {v}_{j}, \sum_{j \in \mathrm{N}} |{v}_{j}| \leq \Gamma_{i}, |{v}_{j}| \leq 1 \}$ for facility $i \in \mathrm{M}$. 

Here, the capacity constraints (\ref{eq02}) of \textbf{(P)} can be expressed as follows so that the demand uncertainty is reflected using the cardinality-constrained uncertainty set $U_{d}^{i}$.
\begin{align} \label{eq08}
\sum_{j \in \mathrm{N}} \bar{d}_{j}x_{ij}  \leq  s_{i}y_{i}, \ \ \ \forall \bar{d} \in U_{d}^{i}, i \in \mathrm{M}.
\end{align}
These constraints are equivalent to the following nonlinear constraints. 
\begin{align} \label{eq09}
\sum_{j \in \mathrm{N}} d_{j}x_{ij} + \max_{R \subseteq \mathrm{N}, |R|\leq{\Gamma}_{i}} \sum_{j \in R} b_{j}x_{ij} \leq  s_{i}y_{i}, \ \ \ \forall i \in \mathrm{M}. 
\end{align}
We note that these constraints are also equivalent to the following:
\begin{align} \label{eq09_1}
\sum_{j \in \mathrm{N}} d_{j}x_{ij} + \sum_{j \in R} b_{j}x_{ij} \leq  s_{i}y_{i}, \ \ \ \forall R \subseteq \mathrm{N}, |R|\leq{\Gamma}_{i}, i \in \mathrm{M}.
\end{align}
\noindent Then, the problem can be formulated as the following MIP problem by replacing the capacity constraints with constraints (\ref{eq09_1}). 

\begin{align} 
\textrm{\textbf{(RP1)}} \ \ \ \textrm{Minimize} \ \ \ & \sum_{i \in \mathrm{M}} \sum_{j \in \mathrm{N}} c_{ij}x_{ij} + \sum_{i \in \mathrm{M}} f_{i} y_{i} \nonumber \\ 
\textrm{subject to} \ \ \ & \ (\ref{eq03})-(\ref{eq06}), (\ref{eq09_1}). \nonumber
\end{align}

Moreover, we can obtain an alternative MIP formulation of \textbf{(RP1)}. \citet{Bertsimas03, Bertsimas04} 
showed that constraints (\ref{eq09}) can be reformulated using strong duality to the inner maximization term as follows:
\begin{align}
 \sum_{j \in \mathrm{N}} d_{j}x_{ij} + \sum_{j \in \mathrm{N}} p_{ij} + \Gamma_{i} q_{i} \leq  s_{i}y_{i}, \ \ \ \forall i \in \mathrm{M}, \label{eq10} \\
 q_{i} + p_{ij} \geq b_{i}x_{ij},  \ \ \ \forall i \in \mathrm{M}, j \in \mathrm{N}, \label{eq11} \\
 p_{ij} \geq 0, \ \ \ \forall i \in \mathrm{M}, j \in \mathrm{N}, \label{eq12} \\
 q_{i} \geq 0, \ \ \ \forall i \in \mathrm{M} \label{eq13}.
\end{align} 
\noindent Therefore, the robust SSCFLP can be reformulated as the following MIP problem:
\begin{align} \nonumber
\textrm{\textbf{(RP2)}} \ \ \ \textrm{minimize} \ \ \ & \sum_{i \in \mathrm{M}} \sum_{j \in \mathrm{N}} c_{ij}x_{ij} + \sum_{i \in \mathrm{M}} f_{i} y_{i}  \\ \nonumber
\textrm{subject to} \ \ \ & (\ref{eq03})-(\ref{eq06}),  \\ \nonumber
& (\ref{eq10})-(\ref{eq13}).   \nonumber
\end{align}

\textbf{(RP2)} is a mixed-integer programming problem with $mn+m$ additional variables and $2mn+2m$ additional constraints than problem \textbf{(P)}, which makes it more difficult to solve than the deterministic problem \textbf{(P)}. We also note that the lower bounds obtained by solving the LP-relaxation of \textbf{(RP1)} and LP-relaxation of \textbf{(RP2)} are the same. Instead of solving \textbf{(RP2)}, we propose an allocation-based formulation that isolates the difficulty arising from the demand uncertainty into the subproblem. The allocation based formulation provides a stronger LP relaxation bound than the LP relaxation of \textbf{(RP2)}. 

\subsection{Allocation-based formulation of the robust SSCFLP} \label{section22} 

We derive an allocation-based formulation of the robust SSCFLP by employing the Dantzig-Wolfe decomposition. We take (\ref{eq03}) as the joint constraints, and the other constraints of \textbf{(P)} with the robust capacity constraints (\ref{eq09}) substituting the ordinary capacity constraints (\ref{eq02}) can be used for decomposition. Let $\Omega_{i} := \{ (x_{i1},\dots,x_{in},y_{i}) \in \{0,1\}^{n+1} | \sum_{j \in \mathrm{N}} d_{j}x_{ij} + \max_{R \subseteq \mathrm{N}, |R| \leq \Gamma} \sum_{j \in R} b_{j}x_{ij} \leq  s_{i}y_{i} \}$ for $i \in \mathrm{M}$. Then, $\Omega_{i}$ consists of $(x_{i}^{R},0)=(\textbf{0},0)$ and $(x_{i}^{R},1)$ for $R \in \mathrm{S}^{i}$, where $x_{i}^{R}$ is the incidence vector of set $R \subseteq \mathrm{N}$ of customers and $\mathrm{S}^{i}$ is a set of all possible allocations of customers to facility $i \in \mathrm{M}$, i.e. 
\begin{align} \label{eq14}
 \mathrm{S}^{i} := \{ R \subseteq \mathrm{N} | \sum_{j \in R} d_{j} + \max_{S \subseteq R, |S|\leq{\Gamma}_{i}} \sum_{j \in S} b_{j} \leq s_{i} \}, \quad \forall i \in \mathrm{M}. & &
\end{align}
We can define associated binary variables $v_{i}$ for $(\textbf{0},0)$ and binary variables $z_{R}^{i}$ for $(x_{i}^{R},1)$ for $R \in \mathrm{S}^{i}$, $i \in \mathrm{M}$, respectively, with constraints $\sum_{R \in \mathrm{S}^{i}} z_{R}^{i} + v_{i} = 1$ for $i \in \mathrm{M}$. Then, the binary variables $x_{ij}$ and $y_{i}$ can be expressed as $x_{ij}=\sum_{R \in \mathrm{S}^{i}} x_{ij}^{R} z_{R}^{i}=\sum_{R \in \mathrm{S}^{i} ; j \in R} z_{R}^{i}$ and $y_{i} = 0 \cdot v_{i} + \sum_{R \in \mathrm{S}^{i}} 1 \cdot z_{R}^{i} = \sum_{R \in \mathrm{S}^{i}} z_{R}^{i}$, respectively. 

Now, substituting for $x_{ij}$ and $y_{i}$ variables in constraints (\ref{eq03}) and the objective function (\ref{eq01}) leads to the reformulation of the robust SSCFLP as follows:
\begin{align} 
\textrm{\textbf{(AP)} \ Minimize} \ \ \ & \sum_{i \in \mathrm{M}} \sum_{R \in \mathrm{S}^{i}} c_{R}^{i} z_{R}^{i} & \label{eq15} \\
\textrm{subject to} \ \ \ & \sum_{i \in \mathrm{M}} \sum_{R \in \mathrm{S}^{i} ; j \in R } z_{R}^{i} = 1, \ \ \  \forall j \in \mathrm{N}, \label{eq16} \\
& \sum_{R \in \mathrm{S}^{i}} z_{R}^{i} + v_{i} = 1, \ \ \ \forall i \in \mathrm{M}, \label{eq17} \\
& v_{i} \in \{0,1\}, \ \ \  \forall i \in \mathrm{M}, \label{eq18_1} \\
& z_{R}^{i} \in \{0,1\}, \ \ \  \forall R \in \mathrm{S}^{i}, i \in \mathrm{M}, \label{eq18} 
\end{align}
where $c_{R}^{i} := \sum_{j \in R} c_{ij} +  f_{i}$ for $R \in \mathrm{S}^{i}$, $i \in \mathrm{M}$. The binary variable $v_{i}$ is equal to one if facility $i \in \mathrm{M}$ is not opened, and zero otherwise. Also, The binary variable $z_{R}^{i}$ is equal to one if opened facility $i$ covers customers in $R$, and zero otherwise for $R \in \mathrm{S}^{i}$, $i \in \mathrm{M}$. The objective function (\ref{eq15}) minimizes the total fixed costs and assignment costs of all opened facilities. Constraints (\ref{eq16}) ensure that all customers must be covered by exactly one combination of customers at each facility. Constraints (\ref{eq17}) ensure that each facility must be closed or opened, and it must take exactly one combination of customers when it is opened.


\section{Branch-and-price algorithm} \label{section3} 

In this section, we present a branch-and-price algorithm for the allocation-based formulation \textbf{(AP)} of the robust SSCFLP. 

\subsection{Linear programming master problem} \label{section31} 

The LP-relaxation of the allocation-based formulation \textbf{(AP)} can be obtained by dropping the integrality restrictions on the variables as follows: 
\begin{align} 
\textrm{Minimize} \ \ \ & \sum_{i \in \mathrm{M}} \sum_{R \in \mathrm{S}^{i}} c_{R}^{i} z_{R}^{i} & \nonumber \\
\textrm{subject to} \ \ \ & \sum_{i \in \mathrm{M}} \sum_{R \in \mathrm{S}^{i} ; j \in R } z_{R}^{i} = 1, \ \ \ \forall j \in \mathrm{N}, \label{eq20} \\
& \sum_{R \in \mathrm{S}^{i}}z_{R}^{i} + v_{i} = 1, \ \ \ \forall i \in \mathrm{M}, \label{eq21} \\
& 0 \leq  v_{i} \leq 1, \ \ \ \forall i \in \mathrm{M}. \label{eq22_1} \\
& 0 \leq z_{R}^{i} \leq 1, \ \ \ \forall R \in \mathrm{S}^{i}, i \in \mathrm{M},  \label{eq22} 
\end{align}

We can compare the strength of the LP-relaxation of \textbf{(AP)} and LP-relaxation of \textbf{(RP1)} and its reformulation \textbf{(RP2)}. 

\begin{proposition} \label{prop01}
The LP-relaxation of \textbf{(AP)} has the same optimal value as that of the LP-relaxation of \textbf{(RP1)} augmented with all valid inequalities describing the convex hull of $\Omega_{i}$, $i \in M$.
\end{proposition}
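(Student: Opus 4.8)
The plan is to show that both linear programs in the statement optimize the same objective over the same feasible set, namely all $(x,y)$ satisfying the linking constraints (\ref{eq03}) with each block $(x_i,y_i):=(x_{i1},\dots,x_{in},y_i)$ lying in $\mathrm{conv}(\Omega_i)$. Once the two relaxations are exhibited as the minimization of one common objective over one common feasible region, equality of their optimal values is immediate.

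First I would analyze the LP-relaxation of \textbf{(AP)}. Since $\Omega_i \subseteq \{0,1\}^{n+1}$, every point of $\Omega_i$ is a vertex of its own convex hull, so $(\mathbf{0},0)$ and the $(x_i^R,1)$, $R \in \mathrm{S}^i$, are exactly the extreme points of $\mathrm{conv}(\Omega_i)$. Applying the substitution $x_{ij}=\sum_{R \in \mathrm{S}^i;\, j \in R} z_R^i$ and $y_i=\sum_{R \in \mathrm{S}^i} z_R^i$ together with the convexity constraint (\ref{eq21}), any point feasible for the LP-relaxation of \textbf{(AP)} yields $(x_i,y_i)=v_i(\mathbf{0},0)+\sum_{R \in \mathrm{S}^i} z_R^i (x_i^R,1)$, a convex combination of vertices of $\Omega_i$, hence $(x_i,y_i)\in\mathrm{conv}(\Omega_i)$; moreover (\ref{eq20}) becomes (\ref{eq03}), and because $c_R^i=\sum_{j\in R}c_{ij}+f_i$ the objective (\ref{eq15}) equals $\sum_i\big(\sum_j c_{ij}x_{ij}+f_iy_i\big)$. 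Conversely, every $(x_i,y_i)\in\mathrm{conv}(\Omega_i)$ is such a convex combination of these vertices, which recovers LP-feasible multipliers of equal cost. This identifies the projection of the LP-relaxation of \textbf{(AP)} with the minimization of the common objective over $\{(x,y):(\ref{eq03}),\ (x_i,y_i)\in\mathrm{conv}(\Omega_i)\ \forall i\in\mathrm{M}\}$.

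Next I would treat the LP-relaxation of \textbf{(RP1)} augmented with all valid inequalities describing $\mathrm{conv}(\Omega_i)$. Adding these inequalities replaces the relaxed block polytope by its intersection with $\mathrm{conv}(\Omega_i)$. Every integer point of $\Omega_i$ satisfies the relaxed capacity constraints (\ref{eq09_1}), the bounds, and (\ref{eq04}) — indeed (\ref{eq04}) is itself valid for $\mathrm{conv}(\Omega_i)$ — so $\mathrm{conv}(\Omega_i)$ is contained in the relaxed block polytope and the intersection collapses exactly to $\mathrm{conv}(\Omega_i)$. The linking constraints (\ref{eq03}) are unchanged. Hence the augmented relaxation also minimizes the common objective over $\{(x,y):(\ref{eq03}),\ (x_i,y_i)\in\mathrm{conv}(\Omega_i)\}$, and the two optimal values agree.

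I expect the only delicate step to be the first: verifying that the convex-multiplier description embedded in the LP-relaxation of \textbf{(AP)} reproduces $\mathrm{conv}(\Omega_i)$ exactly with no spurious or missing extreme points, which rests on the fact that $0/1$ points are always extreme in their convex hull, and on checking that the costs $c_R^i$ and the substitution are mutually consistent so the objectives match term by term. The backward direction, a Carath\'eodory-type decomposition of an arbitrary point of $\mathrm{conv}(\Omega_i)$ into the listed vertices, is standard, and the handling of the augmenting inequalities for \textbf{(RP1)} is routine once the containment $\mathrm{conv}(\Omega_i)\subseteq$ relaxed block polytope is noted.
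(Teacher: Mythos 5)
Your proof is correct and follows essentially the same route as the paper's, which simply notes that substituting $x_{ij}=\sum_{R \in \mathrm{S}^{i}; j \in R} z_{R}^{i}$, $y_{i}=\sum_{R \in \mathrm{S}^{i}} z_{R}^{i}$ with the convexity constraints is equivalent to requiring $(x_{i1},\dots,x_{in},y_{i}) \in \mathrm{conv}(\Omega_{i})$ — the standard Dantzig-Wolfe equivalence. You have merely spelled out the details (extremality of $0/1$ points, matching objectives, and the collapse of the augmented \textbf{(RP1)} block polytope to $\mathrm{conv}(\Omega_{i})$) that the paper leaves implicit.
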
 
\begin{proof}
The LP-relaxation of \textbf{(AP)} can be obtained by substituting $x_{ij}=\sum_{R \in \mathrm{S}^{i} ; j \in R} z_{R}^{i}$, $y_{i} = \sum_{R \in \mathrm{S}^{i}} z_{R}^{i}$, and constraints (\ref{eq21})-(\ref{eq22}). This is equivalent to substituting $(x_{i1}, \cdots, x_{in}, y_{i}) \in \textrm{conv}(\Omega_{i})$.
\end{proof}

\noindent Because the lower bounds obtained by solving the LP-relaxation of \textbf{(RP1)} and LP-relaxation of \textbf{(RP2)} are the same, we can see that the LP-relaxation of \textbf{(AP)} provides a stronger lower bound than that of the LP-relaxation of \textbf{(RP2)}. We also note that the LP-relaxation bound of \textbf{(AP)} is the same as the Lagrangian dual bound when the joint constraints (\ref{eq03}) are dualized.

The LP-relaxation of \textbf{(AP)} can be modified further for improving computational efficiency.  Constraints (\ref{eq20}) can be replaced by inequalities:
\begin{align} \label{eq23} 
& \sum_{i \in \mathrm{M}} \sum_{R \in \mathrm{S}^{i} ; j \in R } z_{R}^{i} \geq 1, \ \ \ \forall j \in \mathrm{N}.
\end{align} 
\noindent
Because all set-up costs and allocation costs are nonnegative, there exists an optimal solution that also satisfies constraints (\ref{eq20}). Constraints (\ref{eq23}) restrict the corresponding dual variables to be nonnegative, which can make the column generation procedure more stable compared to using unrestricted dual variables. 

Constraints (\ref{eq21}) can be replaced by inequalities:
\begin{align} \label{eq24}
& -\sum_{R \in \mathrm{S}^{i}} z_{R}^{i} \geq -1, \ \ \ \forall i \in \mathrm{M}, 
\end{align} 
because the variables $v_{i}$ can be regarded as slack variables in constraints (\ref{eq21}). In addition to this, constraints (\ref{eq22}) can be replaced by inequalities:
\begin{align} \label{eq25}
& z_{R}^{i} \geq 0, \ \ \ \forall R \in \mathrm{S}^{i}, i \in \mathrm{M}, 
\end{align} 
because of constraints (\ref{eq24}).

As a result, the LP-relaxation of \textbf{(AP)} can be stated as the following linear programming master problem:
\begin{align} 
\textbf{(MP)} \ \ \ \textrm{Minimize} \ \ \ & \sum_{i \in \mathrm{M}} \sum_{R \in \mathrm{S}^{i}} c_{R}^{i} z_{R}^{i} \ \ \ \nonumber \\
\textrm{subject to} \ \ \ & (\ref{eq23}), (\ref{eq24}), (\ref{eq25}).  \nonumber
\end{align}

\subsection{Restricted master problem and subprolem} \label{section32} 

We cannot solve \textbf{(MP)} directly since it has exponencially many variables. We suppose that we have a subset $\mathrm{R}^{i}$ of $\mathrm{S}^{i}$ for $i \in \mathrm{M}$ which provides a feasible solution to \textbf{(MP)}. Then, the following restricted problem \textbf{(RMP)} can be obtained: 
\begin{align} 
\textbf{(RMP)} \ \ \ \textrm{Minimize} & \sum_{i \in \mathrm{M}} \sum_{R \in \mathrm{R}^{i}} c_{R}^{i} z_{R}^{i}  \nonumber \\
\textrm{subject to} & \sum_{i \in \mathrm{M}} \sum_{R \in \mathrm{R}^{i} ; j \in R } z_{R}^{i} \geq 1, \ \ \ \forall j \in \mathrm{N}, \label{eq26} \\
& -\sum_{R \in \mathrm{R}^{i}}z_{R}^{i} \geq -1, \ \ \ \forall i \in \mathrm{M}. \label{eq27} \\
& z_{R}^{i} \geq 0, \ \ \ \forall R \in \mathrm{R}^{i}, i \in \mathrm{M}.  \label{eq28}
\end{align}

\noindent We solve \textbf{(RMP)} by the simplex method and obtain an optimal solution $z^{*}$ with optimal value $\underline{Z}$. Let $\lambda \in \mathbb{R}_{+}^{n}$ and $\mu \in \mathbb{R}_{+}^{m}$ be a dual optimal solution corresponding to constraints (\ref{eq26}) and (\ref{eq27}), respectively. 

During column generation, a column with a negative reduced cost is generated and added to $\textbf{(RMP)}$ iteratively. This procedure continues until an optimal solution of $\textbf{(RMP)}$ becomes also optimal for $\textbf{(MP)}$. The reduced cost of a variable $z_{R}^{i}$ is $\mathrm{RC}_{i, R}(\lambda,\mu) := \sum_{j \in R}(c_{ij}-\lambda_{j})+f_{i}+\mu_{i}$ for each $ R \in \mathrm{S}^{i}, i \in \mathrm{M}$. We then try to find a column having negative reduced cost by solving the following subproblem:
\begin{align} \nonumber
\textrm{\textbf{(Sub-i)}} \ \ \ \textrm{Maximize} \ \ \ & \xi^{i} := \sum_{j \in \mathrm{N}}(\lambda_{j}-c_{ij})x_{ij} &  \\ \nonumber
\textrm{subject to} \ \ \ & d_{j}x_{ij} + \max_{R \subseteq \mathrm{N}, |R| \leq {\Gamma}_{i}} \sum_{j \in R} b_{j}x_{ij} \leq  s_{i}, & \nonumber \\
& x_{ij} \in \{0,1\}, \ \ \ \forall j \in \mathrm{N}. \nonumber
\end{align} 
This problem is the robust binary knapsack problem with cardinality-constrained weight uncertainty. If every 
\begin{align} \nonumber
\min_{R \in \mathrm{S}^{i}} \mathrm{RC}_{i, R}(\lambda,\mu) = -\xi^{i}+f_{i}+\mu_{i}
\end{align} 
has a nonnegative value for $i \in \mathrm{M}$, an optimal solution of \textbf{(RMP)} is also an optimal solution of \textbf{(MP)}. Otherwise, if $\mathrm{RC}_{i, R}(\lambda,\mu)<0$, an optimal solution of \textbf{(Sub-i)} generates a column which has the smallest negative reduced cost among columns involving facility $i$ for $i \in \mathrm{M}$.

\citet{Bertsimas03} showed that the robust BKP can be solved by solving the ordinary BKPs at most $n+1$ times. \citet{Lee12} reduced the number of iterations to at most $n-\Gamma_{i}+1$ times, and we apply it to solve \textbf{(Sub-i)}. Let $C^{i}$ be a feasible solution set of \textbf{(Sub-i)} for $i \in \mathrm{M}$. We assume that the values of the maximum possible deviation from the nominal demand are listed in nonincreasing order, and define a dummy value $b_{n+1} = 0$, i.e. $b_{1} \geq b_{2} \geq \dots \geq b_{n} \geq b_{n+1} = 0$. We define a set $\mathrm{N}^{+} = \mathrm{N} \cup \{n+1\}$ and sets $N_{l}=\{1, \dots l\}$ for all $l \in \mathrm{N}^{+}$. We then define $C_{l}^{i}=\{x_{i} \in \{0,1\}^{n}| \sum_{j \in \mathrm{N}} d_{j}x_{ij} + \sum_{j \in N_{l} \cap \mathrm{N}} (b_{j}-b_{l})x_{ij} \leq  s_{i}-\Gamma_{i}b_{l}\}$ for $l \in \{\Gamma_{i}, \Gamma_{i}+1, \dots, n-1, n+1 \}$. Then, $C^{i}$ can be obtained using the solution sets of ordinary binary knapsack problems (BKP). 
\begin{proposition} \label{prop02}
$C^{i}=\cup_{l \in \{\Gamma_{i},\Gamma_{i}+1, \dots, n-1, n+1 \}} C_{l}^{i}$
\end{proposition}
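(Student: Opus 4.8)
The plan is to reduce the set identity to a single pointwise statement about the robust protection term and then prove that statement by a closed-form computation. Fix a facility $i \in \mathrm{M}$ and a point $x_i \in \{0,1\}^n$ with support $J = \{j \in \mathrm{N} : x_{ij} = 1\}$, and write $\beta_i(x_i) := \max_{R \subseteq \mathrm{N},\, |R| \leq \Gamma_i} \sum_{j \in R} b_j x_{ij}$ for the protection value and $g_i(l) := \Gamma_i b_l + \sum_{j \in N_l \cap \mathrm{N}} (b_j - b_l) x_{ij}$ for each admissible $l$. By definition, $x_i \in C^i$ is exactly the inequality $\sum_{j \in \mathrm{N}} d_j x_{ij} + \beta_i(x_i) \leq s_i$, while $x_i \in C_l^i$ is exactly $\sum_{j \in \mathrm{N}} d_j x_{ij} + g_i(l) \leq s_i$. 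Since a minimum is at most a threshold if and only if some term is at most that threshold, membership in $\cup_l C_l^i$ is equivalent to $\sum_{j \in \mathrm{N}} d_j x_{ij} + \min_l g_i(l) \leq s_i$. Hence it suffices to prove the key identity $\beta_i(x_i) = \min_{l \in \{\Gamma_i, \dots, n-1, n+1\}} g_i(l)$.

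First I would put $g_i(l)$ into closed form. Letting $a(l) := |\{j \in J : j \leq l\}|$ and listing the support as $j_1 < \cdots < j_k$ (so that $b_{j_1} \geq \cdots \geq b_{j_k}$ by the assumed ordering of the deviations), the selected customers indexed at most $l$ are precisely $j_1, \dots, j_{a(l)}$, and a short rearrangement gives $g_i(l) = (\Gamma_i - a(l)) b_l + \sum_{t=1}^{a(l)} b_{j_t}$. On the other hand $\beta_i(x_i) = \sum_{t=1}^{\min(\Gamma_i, k)} b_{j_t}$, the sum of the $\min(\Gamma_i,k)$ largest selected deviations.

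Next I would prove the lower bound $g_i(l) \geq \beta_i(x_i)$ for every $l$, by splitting on the sign of $\Gamma_i - a(l)$. If $a(l) \leq \Gamma_i$, the terms of $\beta_i(x_i)$ missing from $\sum_{t=1}^{a(l)} b_{j_t}$ are the deviations $b_{j_t}$ with $t > a(l)$; each such customer has index $> l$, hence $b_{j_t} \leq b_l$, and there are at most $\Gamma_i - a(l)$ of them, so the surplus $(\Gamma_i - a(l)) b_l$ covers them (using $b_l \geq 0$). If $a(l) > \Gamma_i$, then $\min(\Gamma_i,k) = \Gamma_i$ and the extra terms $b_{j_t}$ with $\Gamma_i < t \leq a(l)$ counted in $g_i(l)$ come from customers indexed at most $l$, hence $b_{j_t} \geq b_l$, which compensates the deficit $(a(l) - \Gamma_i) b_l$. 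Finally I would exhibit an index attaining equality inside the stated range: if $k > \Gamma_i$, take $l = j_{\Gamma_i}$, for which $a(l) = \Gamma_i$ and $g_i(l) = \beta_i(x_i)$, and note $j_{\Gamma_i} < n$ because a $(\Gamma_i+1)$-st selected customer exists; if $k \leq \Gamma_i$, take $l = n+1$, where $b_{n+1} = 0$ gives $g_i(n+1) = \sum_{j \in J} b_j = \beta_i(x_i)$. This is exactly why the index $l = n$ may be dropped from the range.

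I expect the case analysis for the inequality $g_i(l) \geq \beta_i(x_i)$, together with the identification of the minimizing index, to be the main obstacle, since this is where the monotonicity $b_1 \geq \cdots \geq b_n \geq b_{n+1} = 0$ and the nonnegativity of the deviations must be used precisely to justify both the bound and the specific index set $\{\Gamma_i, \dots, n-1, n+1\}$. Once the closed form of $g_i(l)$ is available, assembling the two inclusions from the key identity is routine: $x_i \in C^i$ forces $g_i(l^{*}) = \beta_i(x_i) \leq s_i - \sum_{j \in \mathrm{N}} d_j x_{ij}$ at a minimizing $l^{*}$, so $x_i \in C_{l^{*}}^i$; conversely $g_i(l) \geq \beta_i(x_i)$ turns any membership $x_i \in C_l^i$ back into $x_i \in C^i$.
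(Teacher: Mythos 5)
Your proof is correct, but it does not mirror the paper's: the paper disposes of Proposition~\ref{prop02} with a one-line citation to \citet{Lee12}, whose argument (building on Theorem~3 of \citet{Bertsimas03}) derives the identity $\beta_i(x_i)=\min_l g_i(l)$ by LP duality applied to the inner protection problem --- the dual is a small LP whose optimum is attained at one of the breakpoints $b_l$, and the contribution of \citet{Lee12} is exactly the refinement that the candidates $l<\Gamma_i$ and $l=n$ may be discarded, shrinking the index set to $\{\Gamma_i,\dots,n-1,n+1\}$. You instead reprove the whole statement combinatorially and self-containedly: the closed form $g_i(l)=(\Gamma_i-a(l))b_l+\sum_{t=1}^{a(l)}b_{j_t}$, the pointwise bound $g_i(l)\ge\beta_i(x_i)$ via the two-case exchange argument (missing terms have index $>l$ hence deviation $\le b_l$; surplus terms have index $\le l$ hence deviation $\ge b_l$), and explicit minimizers $l=j_{\Gamma_i}$ when $k>\Gamma_i$ and $l=n+1$ when $k\le\Gamma_i$, which in particular makes transparent why $l=n$ is never needed ($j_{\Gamma_i}=n$ would force $k=\Gamma_i$, in which case $l=n+1$ already attains the minimum). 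The duality route buys brevity and extra generality (it handles fractional $\Gamma_i$ and the continuous deviation variables $|v_j|\le 1$ directly); your route buys an elementary argument confined to the $0/1$ setting that the subproblem actually uses, at the cost of the case analysis. Two small points you should tighten: the attainment case $k>\Gamma_i$ tacitly assumes $\Gamma_i\ge 1$ (for $\Gamma_i=0$ the index $j_{\Gamma_i}$ is undefined; take $l=\Gamma_i=0$, for which $N_0=\emptyset$ and $g_i(0)=0=\beta_i(x_i)$, recovering the nominal knapsack), and you should state explicitly that $j_{\Gamma_i}\ge\Gamma_i$ --- immediate since $j_1<\dots<j_{\Gamma_i}$ are distinct positive integers --- so that your exhibited minimizer verifiably lies in the stated range $\{\Gamma_i,\dots,n-1,n+1\}$ at both ends.
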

\begin{proof}
We refer to \citet{Lee12} for the proof. 
\end{proof}
\noindent Proposition \ref{prop02} implies that we can solve \textbf{(Sub-i)} by solving BKPs $n-\Gamma_{i}+1$ times and taking the best solution among the optimal solutions to BKPs. 

The BKP can be solved by a branch-and-bound algorithm or a dynamic programming approach. \citet{Pisinger97} has provided the minknap algorithm based on the dynamic programming with pseudo-polynomial time complexity of $O(ns_i)$. Moreover, \citet{Martello99} showed that the minknap algorithm solved the BKP faster than the other algorithms based on a branch-and-bound algorithm only. As mentioned previously, we solve the RBKP by solving the BKP $n-\Gamma+1$ times, and it has pseudo-polynomial time complexity of $O((n-\Gamma_{i}+1)ns_i)$. 

We note that we may solve the MIP reformulation of the RBKP using Bertsimas and Sim's approach \citep{Bertsimas03,Bertsimas04}. However, \citet{Monaci13} reported that the algorithm of \citet{Lee12} solved the RBKP more effectively than CPLEX, which solved the MIP reformulation of the RBKP.

\subsection{Branching scheme} \label{section33} 

If the optimal solution to \textbf{(MP)} has fractional values, we need to branch. However, direct branching on $z_{R}^{i}$ variables is not desirable. For example, if we branch on a variable $z_{R}^{i}$, two nodes are generated; one has $z_{R}^{i}=0$, and the other has $z_{R}^{i}=1$. If $z_{R}^{i}$ is fixed to zero, we need to make sure that the column for $z_{R}^{i}$ will not be generated again in subsequent column generation procedure, which is a nontrivial task. Such branching scheme also divides the feasible solution set unevenly. \citet{Diaz02} discuss this defect of branching on the $z_{R}^{i}$ variables directly.  

Instead, we use branching on the variables of \textbf{(RP1)} directly as suggested in \citet{Ceselli05}. Let $z^{*}$ be an optimal solution of \textbf{(MP)}. Then, the value of $x$ and $y$ variables can be obtained as $x_{ij}^{*} = \sum_{R \in \mathrm{S}^{i} ; j \in R } z_{R}^{i*}$ for $i \in \mathrm{M}, j \in \mathrm{N},$ and $y_{i}^{*} = \sum_{R \in \mathrm{S}^{i}} z_{R}^{i*}$ for $i \in \mathrm{M}$ as shown in section \ref{section22}. We note that $x$ and $y$ variables are integral if and only if $z$ variables are integral.  

\citet{Ceselli05} used branching on $x$ variables only for the capacitated p-median problem, which is a variation of facility location problem. However, our preliminary testing showed that branching on $y$ variables first and then on $x$ variables gives better results. Therefore, we do branching on $x$ variables when all $y$ variables have integer values.  \citet{Holmberg99} also discussed some advantages and disadvantages of each branching scheme for the SSCFLP.  

When we branch on $y$ variables, we branch on the variable $y_{i}$ having value closest to $0.5$ among the candidate $y$ variables for branching. We set $y_{i}=0$ on one branch, and $y_{i}=1$ on the other branch.    

If all $y$ variables are integer-valued and there are some fractional $x$ variables, we branch on $x$ variables. Let $x^{*}$ be the current fractional solution. We identify a customer $j'$ and use generalized upper bound (GUB) dichotomy \citep{Savelsbergh97} on the variables $x_{ij'}$ for all $i \in \mathrm{M}$. 

Let $N^{*} \subseteq \mathrm{N}$ be the set of customers $j$ such that $x_{ij}>0$ for more than one $i \in \mathrm{M}$. For each $j \in N^{*}$, we divide $\mathrm{M}$ into four sets $M^{11}_{j}$, $M^{12}_{j}$, $M^{21}_{j}$, and $M^{22}_{j}$ as follows. First, we divide $\mathrm{M}$ into disjoint sets $M^{1}_{j}$ and $M^{2}_{j}$ for each $j \in \mathrm{N}$, such that $i \in M^{1}_{j}$ if $x_{ij}^{*}>0$, and $i \in M^{2}_{j}$ otherwise. Second, $M^{1}_{j}$ is divided into disjoint sets $M^{11}_{j}$ and $M^{12}_{j}$ such that $M^{11}_{j}$ minimizes $\lvert \sum_{i \in M^{11}_{j}} x_{ij}^{*} - 0.5 \rvert$. We solve the following ordinary knapsack problem:
\begin{align} \nonumber
\kappa_{j} := \textrm{max}_{ M^{11}_{j} \subseteq M^{1}_{j}} \{ \sum_{i \in M^{11}_{j}} x_{ij}^{*} \ | \ \sum_{i \in M^{11}_{j}} x_{ij}^{*} \leq 0.5 \}, \ \ \ j \in N^{*},
\end{align} 
to divide set $M^{1}_{j}$ for each $j \in N^{*}$. Third, choose customer $j'=\textrm{argmin}_{j \in N^{*}} \lvert \kappa_{j} - 0.5 \rvert$ to make $\sum_{i \in M^{11}_{j'}} x_{ij'}^{*}$ close to $0.5$. Finally, divide $M^{2}_{j'}$ into disjoint sets $M^{21}_{j'}$ and $M^{22}_{j'}$ to be of the same size. We then branch on $x_{ij'}$ for all $i \in \mathrm{M}$. We set $x_{ij'}=0$ for $i \in M^{11}_{j'} \cup M^{21}_{j'}$ on one branch, and we set $x_{ij'}=0$ for $i \in M^{12}_{j'} \cup M^{22}_{j'}$ on the other. 

We need to reflect the effect of some fixed variables to \textbf{(RMP)} and modify subproblem during subsequent column generation procedure. If variable $y_{i}$ is fixed to zero i.e. $y_{i}=0$, then we set the upper bounds of $z_{R}^{i}$ variables to zero for all $R \in \mathrm{R}^{i}$, and we do not solve \textbf{(Sub-i)} during the column generation. Meanwhile, if variable $y_{i}$ is fixed to one, i.e. $y_{i}=1$, inequality of constraints (\ref{eq27}) of \textbf{(RMP)} is replaced by equality. As a result, the dual variable $\mu_{i}$ becomes free without nonnegativity, but other constraints of \textbf{(RMP)} are not changed and \textbf{(Sub-i)} still remains to be the robust BKP.
 
If variable $x_{ij}$ is fixed to zero, i.e. $x_{ij}=0$, then we fix the upper bounds of $z_{R}^{i}$ variables to zero for all $R \in \mathrm{R}^{i}$ satisfying $j \in R$. Also, the subproblem \textbf{(Sub-i)} does not generate the column with $x_{ij}=1$ during the column generation by setting the objective coefficient of $x_{ij}$ to some negative value. We note that we do not need to consider the case of fixing variable $x_{ij}$ to one because GUB dichotomy has been adopted for branching on $x$ variables.

\subsection{Early termination and variable fixing} \label{section34} 

An optimal dual solution of \textbf{(RMP)} can be used to facilitate the branch-and-price procedure. In this section, we consider how the column generation can be terminated earlier before \textbf{(MP)} is completely optimized and how to fix the values of some variables. 

The dual problem of \textbf{(MP)} with the dual variables $\lambda \in \mathbb{R}_{+}^{n}$ and $\mu \in \mathbb{R}_{+}^{m}$ is as follows:
\begin{align} 
\textbf{(DMP)} \ \ \ \textrm{Maximize} \ \ \ & \sum_{j \in \mathrm{N}} \lambda_{j} - \sum_{i \in \mathrm{M}}  \mu_{i} & \nonumber \\
\textrm{subject to} \ \ \ &  \sum_{j \in R} \lambda_{j} - \mu_{i} \leq c_{R}^{i}, \ \ \ \forall R \in \mathrm{S}^{i}, i \in \mathrm{M},  \label{eq30} \\
& \lambda_{j} \geq 0, \ \ \ \forall j \in \mathrm{N}, \nonumber \\
& \mu_{i} \geq 0, \ \ \ \forall i \in \mathrm{M},  \nonumber
\end{align}
where $c_{R}^{i} = f_{i} + \sum_{j \in R} c_{ij}$ for $R \in \mathrm{S}^{i}$, $i \in \mathrm{M}$. The dual of the restricted master problem \textbf{(RMP)} can be obtained by substituting $\mathrm{S}^{i}$ by $\mathrm{R}^{i}$ in constraints (\ref{eq30}). We call it \textbf{(DRMP)}.   

Let $(\lambda^{*}, \mu^{*})$ be an optimal solution of \textbf{(DRMP)}. We note that the minimum reduced cost of variables $z_{R}^{i}$, $R \in \mathrm{S}^{i}$ for each $i \in \mathrm{M}$ is equal to $\mu_{i}^{*} + f_{i}-\xi^{i*}$, where $\xi^{i*}$ is optimal value of \textbf{(Sub-i)} with $\lambda=\lambda^{*}$. Let $\nu^{*} \in \mathbb{R}^{m}$ be a vector where $\nu_{i}^{*} := \mu^{*}_{i} + \mathrm{min} \{f_{i}-\xi^{i*}, 0\}$. Then, $(\lambda, \mu) = (\lambda^{*}, \mu^{*}-\nu^{*})$ is a feasible solution to \textbf{(DMP)}. 

\begin{proposition} \label{prop03}
$(\lambda, \mu) = (\lambda^{*}, \mu^{*}-\nu^{*})$ is a feasible solution to \textbf{(DMP)}.
\end{proposition}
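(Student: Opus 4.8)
The plan is to verify directly the three conditions defining feasibility for \textbf{(DMP)}: nonnegativity of $\lambda$, nonnegativity of $\mu$, and the constraints (\ref{eq30}). Writing $\mu_{i} := \mu_{i}^{*} - \nu_{i}^{*}$ and substituting the definition $\nu_{i}^{*} = \mu_{i}^{*} + \min\{f_{i} - \xi^{i*},\, 0\}$, the first step is the elementary simplification
\begin{align} \nonumber
\mu_{i} = -\min\{f_{i} - \xi^{i*},\, 0\} = \max\{\xi^{i*} - f_{i},\, 0\} \geq 0, \ \ \ \forall i \in \mathrm{M},
\end{align}
which cancels $\mu_{i}^{*}$ entirely and settles nonnegativity of $\mu$ at once. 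Nonnegativity of $\lambda = \lambda^{*}$ is inherited for free from the feasibility of $(\lambda^{*}, \mu^{*})$ for \textbf{(DRMP)}, whose dual variables are constrained to be nonnegative.

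The heart of the argument is constraints (\ref{eq30}), which I would read as nonnegativity of reduced costs: using $c_{R}^{i} = f_{i} + \sum_{j \in R} c_{ij}$, the inequality $\sum_{j \in R} \lambda_{j}^{*} - \mu_{i} \leq c_{R}^{i}$ is precisely $\mathrm{RC}_{i,R}(\lambda^{*},\mu) \geq 0$. Because $\mu_{i}$ enters this quantity only additively, the minimization over $R \in \mathrm{S}^{i}$ is unaffected by the choice of $\mu_{i}$, and the reduced-cost identity recorded just before the proposition gives
\begin{align} \nonumber
\min_{R \in \mathrm{S}^{i}} \mathrm{RC}_{i,R}(\lambda^{*},\mu) = f_{i} + \mu_{i} - \xi^{i*},
\end{align}
with $\xi^{i*}$ the optimal value of \textbf{(Sub-i)} at $\lambda = \lambda^{*}$. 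Hence the entire family (\ref{eq30}) for facility $i$ collapses to the single requirement $\xi^{i*} - f_{i} \leq \mu_{i}$, which the closed form from the first step meets: with equality when $\xi^{i*} > f_{i}$, and because $\mu_{i} = 0 \geq \xi^{i*} - f_{i}$ when $\xi^{i*} \leq f_{i}$.

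The one point deserving attention — the main, if mild, obstacle — is that in the regime $\xi^{i*} \leq f_{i}$ the construction lowers the dual value from $\mu_{i}^{*}$ down to $0$, and lowering $\mu_{i}$ \emph{tightens} (\ref{eq30}) rather than relaxing it; one must therefore confirm the constraints survive the decrease rather than simply inheriting them from \textbf{(DRMP)}. This is exactly the case $\xi^{i*} \leq f_{i}$ of the reduction just carried out, so no separate estimate is needed. Collecting the three verified conditions then yields that $(\lambda^{*}, \mu^{*} - \nu^{*})$ is feasible for \textbf{(DMP)}, completing the proof.
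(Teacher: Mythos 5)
Your proof is correct and takes essentially the same approach as the paper's: both verify $\mu_{i}^{*}-\nu_{i}^{*}=-\min\{f_{i}-\xi^{i*},0\}\geq 0$ directly and collapse the constraint family (\ref{eq30}) for each facility $i$ to the single scalar inequality $\mu_{i}\geq \xi^{i*}-f_{i}$ via the subproblem optimal value, which the closed form $\mu_{i}=\max\{\xi^{i*}-f_{i},0\}$ satisfies. Your reduced-cost phrasing and the explicit case check when $\xi^{i*}\leq f_{i}$ simply spell out what the paper dispatches as ``obviously'' satisfied.
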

\begin{proof}
For each $i \in \mathrm{M}$, $\mu_{i}^{*}-\nu_{i}^{*}=-\mathrm{min} \{f_{i}-\xi^{i*}, 0\} \geq 0$. Also, constraints (\ref{eq30}) are equivalent to $-\mu_{i} \leq \textrm{min}_{R \in \mathrm{S}^{i}} (c_{R}^{i} - \sum_{j \in R} \lambda_{j})= f_{i} + \textrm{min}_{R \in \mathrm{S}^{i}} \sum_{j \in R} (c_{R}^{i} - \lambda_{j}) = f_{i} - \xi^{i}$ for $i \in \mathrm{M}$. We can see that the nonnegative vector $(\lambda, \mu)$ with $\lambda = \lambda^{*}$ and $\mu_{i} = -\mathrm{min} \{f_{i}-\xi^{i*}, 0\}$, $i \in \mathrm{M}$ satisfies these constraints obviously. Therefore, $(\lambda, \mu) = (\lambda^{*}, \mu^{*}-\nu^{*})$ is a feasible solution to \textbf{(DMP)}.
\end{proof}

As a result, the objective value $\sum_{j \in \mathrm{N}} \lambda_{j}^{*} - \sum_{i \in \mathrm{M}} (\mu_{i}^{*}-\nu_{i}^{*})= \underline{Z} + \sum_{i \in \mathrm{M}} \nu_{i}^{*}$ can provide a lower bound on the optimal value of \textbf{(MP)}, where $\underline{Z}$ is the optimal value to \textbf{(RMP)}. If $\underline{Z} + \sum_{i \in \mathrm{M}} \nu_{i}^{*}$ is greater than the current incumbent value, the column generation is terminated and the node is pruned. 

Moreover, we can fix the value of some $x$ and $y$ variables to reduce the solution space although only $z$ variables appear in \textbf{(MP)} and \textbf{(RMP)}. The reduced costs of $x$ and $y$ variables can be computed by adding the constraints $-x_{ij} + \sum_{R \in \mathrm{S}^{i} ; j \in R} z_{R}^{i}=0$, $x_{ij} \geq 0$ for $i \in \mathrm{M}$, $j \in \mathrm{N}$ and $-y_{i} + \sum_{R \in \mathrm{S}^{i}} z_{R}^{i} = 0$, $y_{i} \geq 0$ for $i \in \mathrm{M}$ to \textbf{(MP)}, respectively, when they have zero values. Also, the reduced cost of surplus $v$ variables, where $v_{i} = 1- \sum_{R \in \mathrm{S}^{i}} z_{R}^{i} = 1-y_{i}$, $i \in \mathrm{M}$, also can be computed by adding the constraints $-v_{i} - \sum_{R \in \mathrm{S}^{i}} z_{R}^{i} = -1$, $v_{i} \geq 0$, for $i \in \mathrm{M}$ to \textbf{(MP)}. We note that fixing $v_i$ to zero is equivaluent to fixing $y_i$ to one. Let $\delta \in \mathbb{R}^{m \times n}$, $\rho \in \mathbb{R}^{m}$, and $\tau \in \mathbb{R}^{m}$ denote an optimal dual solution corresponding to the coupling constraints for $x$, $y$, and $v$ variables, respectively. Because only these coupling constraints have $x$, $y$, and $v$ variables, the reduced cost of $x_{ij}$ is equal to $0 - (-1) \delta_{ij} = \delta_{ij}$ for $i \in \mathrm{M}$, $j \in \mathrm{N}$, the reduced cost of $y_{i}$ is equal to $0 - (-1) \rho_{i} = \rho_{i}$, and the reduced cost of $v_{i}$ is equal to $0 - (-1) \tau_{i} = \tau_{i}$. 

This technique is based on the approach of \citet{Dearagao03} and \citet{Fukasawa06}. However, explicitly adding these constraints in the problems not only increases the size of the problem but changes the structure of the master problem and the subproblem. Moreover, these explicit coupling constraints can amplify degeneracy problem of dual feasible solutions, which can hamper the convergence of the column generation procedure \citep{Lee12}. Therefore, we apply an alternative approach without using the explicit coupling constraints. 
 
When $x_{ij}=0$ for some $i \in \mathrm{M}$, $j \in \mathrm{N}$ in an optimal solution to \textbf{(MP)}, we assume that the coupling constraint $-x_{ij}+\sum_{R \in \mathrm{S}^{i} ; j \in R} z_{R}^{i} = 0$ is included in \textbf{(MP)}. Let $\mathrm{S}^{ij}$ be a set of all possible customer allocations including customer $j$ to facility $i$ i.e. $\mathrm{S}^{ij} := \{ R \in \mathrm{S}^{i} \ | \ j \in R \} $. Let $\xi^{ij}$ be the optimal value of \textbf{(Sub-i)} when $x_i$ is fixed to one. It can be solved like the original subproblem because we can solve the ordinary knapsack problem $n-\Gamma_i+1$ times after fixing $x_j=1$ when we solve $\textbf{(Sub-i)}$. The following proposition shows that the reduced cost of $x_{ij}$ can be calculated without adding the additional explicit coupling constraints to \textbf{(MP)}.  

\begin{proposition} \label{prop04}
Let $(\lambda^{*}, \mu^{*})$ be an optimal solution to \textbf{(DMP)}. For given $i \in \mathrm{M}$ and $j \in \mathrm{N}$, let $\delta_{ij}^{*} = -\xi^{ij*}+f_{i}+\mu_{i}^{*}$. Then, $(\lambda^{*}, \mu^{*}, \delta_{ij}^{*})$ is an optimal solution to the dual problem of \textbf{(MP)} with the coupling constraints $-x_{ij}+\sum_{R \in \mathrm{S}^{i} ; j \in R} z_{R}^{i} = 0, x_{ij} \geq 0$. Moreover, the reduced cost of $x_{ij}$ is equal to $\delta_{ij}^{*}$. 
\end{proposition}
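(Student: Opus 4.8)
The plan is to write down the LP dual of \textbf{(MP)} after the single coupling constraint $-x_{ij}+\sum_{R \in \mathrm{S}^{ij}} z_{R}^{i}=0$ together with $x_{ij} \geq 0$ has been appended, and then verify that the proposed triple is an optimal dual solution by exhibiting dual feasibility. First I would observe that appending this constraint and the variable $x_{ij}$ does not change the optimal value of \textbf{(MP)}: since $\sum_{R \in \mathrm{S}^{ij}} z_{R}^{i} \geq 0$ always holds, the equation merely defines $x_{ij}$ and the restriction $x_{ij} \geq 0$ is automatically satisfiable. Consequently, by LP strong duality the augmented dual shares its optimal value with \textbf{(DMP)}. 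The crucial structural point is that the new constraint has right-hand side zero and $x_{ij}$ carries objective coefficient zero, so the dual variable $\delta_{ij}$ enters neither the dual objective $\sum_{j \in \mathrm{N}} \lambda_{j} - \sum_{i \in \mathrm{M}} \mu_{i}$ nor its right-hand side; hence any dual-feasible triple $(\lambda^{*},\mu^{*},\delta_{ij})$ attains exactly the \textbf{(DMP)}-optimal value. Optimality therefore reduces entirely to checking dual feasibility.

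Next I would read off the constraints of the augmented dual column by column. The column of $x_{ij}$ gives $\delta_{ij} \geq 0$. Every column $z_{R}^{i'}$ with $i' \neq i$, and every column $z_{R}^{i}$ with $j \notin R$, yields precisely the original constraint (\ref{eq30}), which $(\lambda^{*},\mu^{*})$ already satisfies because it is feasible for \textbf{(DMP)}. Only the columns $z_{R}^{i}$ with $R \in \mathrm{S}^{ij}$ are modified: using $c_{R}^{i}=f_{i}+\sum_{j' \in R} c_{ij'}$, the constraint $\sum_{j' \in R}\lambda_{j'}^{*} - \mu_{i}^{*} + \delta_{ij} \leq c_{R}^{i}$ rearranges to $\delta_{ij} \leq f_{i}+\mu_{i}^{*} - \sum_{j' \in R}(\lambda_{j'}^{*}-c_{ij'})$, and imposing it over all $R \in \mathrm{S}^{ij}$ is equivalent to $\delta_{ij} \leq f_{i}+\mu_{i}^{*} - \xi^{ij*}$, since $\xi^{ij*}=\max_{R \in \mathrm{S}^{ij}} \sum_{j' \in R}(\lambda_{j'}^{*}-c_{ij'})$ is exactly the value of \textbf{(Sub-i)} with $x_{ij}$ fixed to one. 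The choice $\delta_{ij}^{*}=-\xi^{ij*}+f_{i}+\mu_{i}^{*}$ meets this bound with equality, so the modified constraints hold.

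The one remaining and main obstacle is the sign condition $\delta_{ij}^{*} \geq 0$. Here I would invoke the optimality of $(\lambda^{*},\mu^{*})$ for \textbf{(DMP)}: dual feasibility is equivalent to all reduced costs of the $z_{R}^{i}$ columns being nonnegative, so the minimum reduced cost $\mu_{i}^{*}+f_{i}-\xi^{i*}$ is nonnegative, i.e. $\xi^{i*} \leq f_{i}+\mu_{i}^{*}$. Since $\mathrm{S}^{ij} \subseteq \mathrm{S}^{i}$, the restricted maximum obeys $\xi^{ij*} \leq \xi^{i*}$, whence $\delta_{ij}^{*}=f_{i}+\mu_{i}^{*}-\xi^{ij*} \geq f_{i}+\mu_{i}^{*}-\xi^{i*} \geq 0$. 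This completes the feasibility check and hence establishes the optimality of $(\lambda^{*},\mu^{*},\delta_{ij}^{*})$. Finally, the reduced cost of $x_{ij}$ equals its objective coefficient minus the dual value times its single column entry, namely $0-(-1)\delta_{ij}^{*}=\delta_{ij}^{*}$, as claimed.
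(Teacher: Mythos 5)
Your proof is correct and follows essentially the same route as the paper's: write the dual of the augmented \textbf{(MP)}, check feasibility column by column to get $\delta_{ij} \leq -\xi^{ij*}+f_{i}+\mu_{i}^{*}$, and note that the dual objective is independent of $\delta_{ij}$ so feasibility implies optimality. Your third paragraph merely spells out explicitly (via $\mathrm{S}^{ij} \subseteq \mathrm{S}^{i}$, hence $\xi^{ij*} \leq \xi^{i*}$, and nonnegativity of the minimum reduced cost $f_{i}+\mu_{i}^{*}-\xi^{i*}$) the sign condition $\delta_{ij}^{*} \geq 0$ that the paper states tersely as ``$\delta_{ij}^{*} \geq -\xi^{i*}+f_{i}+\mu_{i}^{*}$ is feasible to this constraint,'' which is a welcome clarification but not a different argument.
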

\begin{proof}
After augmenting the coupling constraint to \textbf{(MP)}, constraints (\ref{eq30}) for $i \in \mathrm{M}$ are replaced by $ \delta_{ij} + \sum_{k \in R} \lambda_{k} - \mu_{i} \leq c_{R}^{i}$ for $R \in \mathrm{S}^{ij}$, and $\sum_{k \in R} \lambda_{k} - \mu_{i} \leq c_{R}^{i}$ for $R \in \mathrm{S}^{i} \setminus \mathrm{S}^{ij}$. The first constraints are equivalent to $\delta_{ij} \leq \mathrm{min}_{R \in \mathrm{S}^{ij}} \{ - \sum_{k \in R} \lambda_{k} + \mu_{i} + c_{R}^{i} \}$. Its right-hand side is equal to $- \mathrm{max}_{R \in \mathrm{S}^{ij}}$  $\sum_{k \in R}  (\lambda_{k} -c_{ik}) + f_{i} + \mu_{i} = -\xi^{ij}+f_{i}+\mu_{i}$. Also, the dual constraint corresponding to $x_{ij}$ is $-\delta_{ij} \leq 0$, and $\delta_{ij}^{*} \geq -\xi^{i*}+f_{i}+\mu_{i}^{*}$ is feasible to this constraint. Hence, $(\lambda^{*}, \mu^{*}, \delta_{ij}^{*})$ is a dual feasible solution. Because the dual objective function is independent of $\delta_{ij}$,  $(\lambda^{*}, \mu^{*}, \delta_{ij}^{*})$ is an optimal solution to the dual problem of \textbf{(MP)} with the coupling constraints. Thus, $\delta_{ij}^{*}$ is the reduced cost of $x_{ij}$. 
\end{proof}

\noindent Let $\underline{Z}$ be the optimal value of \textbf{(MP)} and $\overline{Z}$ be the value of the currrent incumbent solution to $\textbf{(AP)}$. If $x_{ij}$ is equal to zero with reduced cost $\delta_{ij}^{*}$, and $\underline{Z} + \delta_{ij}^{*}$ is greater than $\overline{Z}$, then there exists an optimal solution to \textbf{(AP)} with $x_{ij}=0$. Therefore, we can fix the value of $x_{ij}$ to zero in subsequent branch-and-price procedure.   

Variable fixing for $y_{i}=0$, and $y_{i}=1$ can be done similarly. However, we present the next two propositions for completeness. When $y_{i}=0$ for some $i \in \mathrm{M}$ in an optimal solution to \textbf{(MP)}, we assume that the coupling constraints $-y_{i} +\sum_{R \in \mathrm{S}^{i}} z_{R}^{i} = 0$, $y_{i} \geq 0$ are included in \textbf{(MP)}. The following proposition shows that the reduced cost of $y_{i}$ can be calculated without adding the additional explicit coupling constraints to \textbf{(MP)}.  

\begin{proposition} \label{prop05}
Let $(\lambda^{*}, \mu^{*})$ be an optimal solution to \textbf{(DMP)}. For given $i \in \mathrm{M}$, let $\rho_{i}^{*} =-\xi^{i*} + f_{i} + \mu^{*}_{i}$. Then, $(\lambda^{*}, \mu^{*}, \rho_{i}^{*})$ is an optimal solution to the dual problem of \textbf{(MP)} with the coupling constraints $-y_{i} + \sum_{R \in \mathrm{S}^{i}} z_{R}^{i} = 0$, $y_{i} \geq 0$. Moreover, the reduced cost of $y_{i}$ is equal to $\rho_{i}^{*}$. 
\end{proposition}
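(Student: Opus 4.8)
The plan is to mirror the argument of Proposition~\ref{prop04}, noting that the coupling constraint for $y_i$ now involves \emph{every} column $z_R^i$ of facility $i$ rather than only those containing a fixed customer, which in fact simplifies matters. First I would augment \textbf{(MP)} with the single coupling constraint $-y_i + \sum_{R \in \mathrm{S}^i} z_R^i = 0$ together with $y_i \geq 0$, and write down the corresponding dual. Since $y_i$ occurs with coefficient $-1$ in this new equality and nowhere else, introducing its dual variable $\rho_i$ modifies each constraint~(\ref{eq30}) for facility $i$ uniformly: for every $R \in \mathrm{S}^i$ it becomes $\rho_i + \sum_{k \in R} \lambda_k - \mu_i \leq c_R^i$, while the dual column of $y_i$ contributes the single restriction $-\rho_i \leq 0$.

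Next I would collapse this family of modified constraints into one bound on $\rho_i$. Rearranging yields $\rho_i \leq \mu_i + \min_{R \in \mathrm{S}^i}\{c_R^i - \sum_{k \in R}\lambda_k\}$, and since $c_R^i = f_i + \sum_{k \in R} c_{ik}$ the minimization equals $f_i - \max_{R \in \mathrm{S}^i}\sum_{k \in R}(\lambda_k - c_{ik}) = f_i - \xi^i$. Hence the binding requirement is exactly $\rho_i \leq -\xi^i + f_i + \mu_i$, and substituting the optimal $(\lambda^*, \mu^*)$ shows that the proposed value $\rho_i^* = -\xi^{i*} + f_i + \mu_i^*$ saturates it.

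It then remains to verify feasibility and optimality. Feasibility for the new family holds because $\rho_i^*$ attains the upper bound just derived, and the restriction $-\rho_i \leq 0$ holds since $\rho_i^* = \mu_i^* + f_i - \xi^{i*}$ is precisely the minimum reduced cost of the $z_R^i$ columns, which is nonnegative once \textbf{(MP)} is fully optimized. Because the dual objective $\sum_{j \in \mathrm{N}} \lambda_j - \sum_{i \in \mathrm{M}} \mu_i$ is independent of $\rho_i$, appending the coupling constraint cannot alter the optimal objective value, so $(\lambda^*, \mu^*, \rho_i^*)$ stays dual optimal. Finally, by the reduced-cost formula recorded just before Proposition~\ref{prop04} (cost coefficient minus dual value times column), the reduced cost of $y_i$ equals $0 - (-1)\rho_i^* = \rho_i^*$.

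The only step calling for genuine attention — and the natural place to be careful — is the reduction in the second paragraph: one must recognize that, unlike in Proposition~\ref{prop04}, the minimization ranges over all of $\mathrm{S}^i$, so the relevant optimal value is the full subproblem value $\xi^{i*}$ of \textbf{(Sub-i)} itself rather than a customer-restricted variant. Everything else transcribes the earlier argument directly.
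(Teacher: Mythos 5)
Your proposal is correct and follows essentially the same route as the paper's proof: augment \textbf{(MP)} with the coupling constraint, collapse the modified dual constraints into the single bound $\rho_i \leq -\xi^i + f_i + \mu_i$, check $-\rho_i \leq 0$, and invoke independence of the dual objective from $\rho_i$ to conclude optimality and read off the reduced cost. In fact you make explicit one point the paper merely asserts, namely that $\rho_i^* \geq 0$ because it equals the minimum reduced cost of the $z_R^i$ columns, which is nonnegative by feasibility of $(\lambda^*, \mu^*)$ in \textbf{(DMP)}.
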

\begin{proof}
After augmenting the coupling constraint to \textbf{(MP)}, constraints (\ref{eq30}) for $i \in \mathrm{M}$ are replaced by $\rho_{i} + \sum_{k \in R} \lambda_{k} - \mu_{i} \leq c_{R}^{i}$ for $R \in \mathrm{S}^{i}$. The constraints are equivalent to $\rho_{i} \leq \mathrm{min}_{R \in \mathrm{S}^{i}} \{ - \sum_{k \in R} \lambda_{k} + \mu_{i} + c_{R}^{i} \}$. Its right-hand side is equal to $- \mathrm{max}_{R \in \mathrm{S}^{i}}$  $\sum_{k \in R}  (\lambda_{k} -c_{ik}) + f_{i} + \mu_{i} = -\xi^{i}+f_{i}+\mu_{i}$. Also, the dual constraint corresponding to $y_{i}$ is $-\rho_{i} \leq 0$, and $\rho_{i}^{*}$ is feasible to this constraint. Hence, $(\lambda^{*}, \mu^{*}, \rho^{*}_{i})$ is a dual feasible solution. Because the dual objective function is independent of $\rho_{i}$,  $(\lambda^{*}, \mu^{*}, \rho_{i}^{*})$ is an optimal solution to the dual problem of \textbf{(MP)} with the coupling constraints. Thus, $\rho_{i}^{*}$ is the reduced cost of $y_{i}$. 
\end{proof}

\noindent As a result, variable fixing of $y_{i}=0$ is as follows. If $y_{i}$ is equal to zero with reduced cost $\rho_{i}^{*}$, and $\underline{Z} + \rho_{i}^{*} > \overline{Z}$, then $y_{i}$ can be fixed to zero. 

When $v_{i}=0$ in an optimal solution to \textbf{(MP)}, or equivaluently $y_{i}=1$, for some $i \in \mathrm{M}$ in an optimal solution to \textbf{(MP)}, we assume that the coupling constraints $-v_{i} - \sum_{R \in \mathrm{S}^{i}} z_{R}^{i} = -1$, $v_{i} \geq 0$ are included in \textbf{(MP)}. The following proposition shows that the reduced cost of $v_{i}$ can be calculated without adding the additional explicit coupling constraints to \textbf{(MP)}. 

\begin{proposition} \label{prop06}
Let $(\lambda^{*}, \mu^{*})$ be an optimal solution to \textbf{(DMP)}. For given $i \in \mathrm{M}$, let $\tau_{i}^{*} = \mu_i^{*}$. Let $\hat{\mu}^{*} \in \mathbb{R}^{m}$ be a vector where $\hat{\mu}^{*}_{k}=0$ if $k=i$, and $\hat{\mu}^{*}_{k}=\mu_{k}^{*}$ otherwise. Then, $(\lambda^{*}, \hat{\mu}^{*}, \tau_{i}^{*})$ is an optimal solution to the dual problem of \textbf{(MP)} with the coupling constraints $-v_{i} - \sum_{R \in \mathrm{S}^{i}} z_{R}^{i} = -1$, $v_{i} \geq 0$. Moreover, the reduced cost of $v_{i}$ is equal to $\tau_{i}^{*}$. 
\end{proposition}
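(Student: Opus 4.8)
The plan is to follow exactly the template used for Propositions \ref{prop04} and \ref{prop05}: adjoin the coupling constraint to \textbf{(MP)}, write down the corresponding dual, exhibit the candidate solution, and check that it is both feasible and optimal for that dual. First I would form the dual of \textbf{(MP)} after adjoining $-v_{i}-\sum_{R\in\mathrm{S}^{i}}z_{R}^{i}=-1$ with its free multiplier $\tau_{i}$ while retaining constraints (\ref{eq27}). Since each $z_{R}^{i}$ with $R\in\mathrm{S}^{i}$ now appears in both (\ref{eq27}) and the new equality, each time with coefficient $-1$, the dual constraint (\ref{eq30}) for facility $i$ becomes $\sum_{k\in R}\lambda_{k}-\mu_{i}-\tau_{i}\le c_{R}^{i}$, the new variable $v_{i}\ge 0$ contributes the dual constraint $-\tau_{i}\le 0$, and the remaining rows are unchanged. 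The appearance of both $\mu_{i}$ and $\tau_{i}$ reflects the redundancy created by keeping (\ref{eq27}) alongside the equality, and the choice $\hat\mu_{i}^{*}=0$, $\tau_{i}^{*}=\mu_{i}^{*}$ simply shifts all of that dual weight onto $\tau_{i}$.

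Next I would verify feasibility of $(\lambda^{*},\hat\mu^{*},\tau_{i}^{*})$. Because $\hat\mu_{i}^{*}=0$ and $\tau_{i}^{*}=\mu_{i}^{*}$, the modified constraint for facility $i$ collapses to $\sum_{k\in R}\lambda_{k}^{*}-\mu_{i}^{*}\le c_{R}^{i}$, which is precisely the original constraint (\ref{eq30}) satisfied by $(\lambda^{*},\mu^{*})$; the rows for $k\ne i$ are untouched and hold because $\hat\mu_{k}^{*}=\mu_{k}^{*}$; and $\tau_{i}^{*}=\mu_{i}^{*}\ge 0$ together with $\lambda^{*},\hat\mu^{*}\ge 0$ supplies the sign conditions, so the candidate is dual feasible.

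The step that differs from Propositions \ref{prop04} and \ref{prop05}, and the one I expect to be the crux, is optimality: the right-hand side of the coupling constraint here is $-1$ rather than $0$, so the dual objective is not independent of the new multiplier and I cannot simply invoke that independence as before. Instead I would compute the objective directly. The new equality contributes $-\tau_{i}$, so the dual value at the candidate is $\sum_{j}\lambda_{j}^{*}-\sum_{k}\hat\mu_{k}^{*}-\tau_{i}^{*}$; using $\sum_{k}\hat\mu_{k}^{*}=\sum_{k}\mu_{k}^{*}-\mu_{i}^{*}$ and $\tau_{i}^{*}=\mu_{i}^{*}$, the two $\mu_{i}^{*}$ terms cancel and this equals $\sum_{j}\lambda_{j}^{*}-\sum_{k}\mu_{k}^{*}$, the optimal value of \textbf{(DMP)}. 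Since adjoining the definition of $v_{i}$ together with the implied bound $v_{i}\ge 0$ does not change the optimal value of \textbf{(MP)}, this common value is attained, so by weak duality the candidate is optimal. Finally, since $v_{i}$ occurs only in the coupling constraint and with coefficient $-1$, its reduced cost is $0-(-1)\tau_{i}^{*}=\tau_{i}^{*}=\mu_{i}^{*}$, which completes the claim.
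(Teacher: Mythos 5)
Your proposal is correct and follows essentially the same route as the paper: augment \textbf{(MP)} with the coupling constraint, observe the modified dual row $\sum_{k\in R}\lambda_{k}-\mu_{i}-\tau_{i}\le c_{R}^{i}$ and the new row $-\tau_{i}\le 0$, verify feasibility of $(\lambda^{*},\hat{\mu}^{*},\tau_{i}^{*})$, and conclude optimality from the dual objective $\sum_{j}\lambda_{j}-\sum_{k}\mu_{k}-\tau_{i}$ retaining the value of \textbf{(DMP)}. Your explicit cancellation of the two $\mu_{i}^{*}$ terms and the weak-duality remark merely spell out what the paper states more tersely ("its value is not changed"), so there is no substantive difference.
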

\begin{proof}
After augmenting the coupling constraint to \textbf{(MP)}, constraints (\ref{eq30}) for $i \in \mathrm{M}$ are replaced by $-\tau_{i} + \sum_{k \in R} \lambda_{k} - \mu_{i} \leq c_{R}^{i}$ for $R \in \mathrm{S}^{i}$. These constraints are feasible for $\tau_{i} = \tau_{i}^{*} = \mu_i^{*}$ and $\mu_{i} = \hat{\mu}^{*}_{i}=0$. Also, the dual constraint corresponding to $v_{i}$ is $-\tau_{i} \leq 0$, and $\tau_{i}^{*}$ is feasible to this constraint. Hence, $(\lambda^{*}, \hat{\mu}^{*}, \tau_{i}^{*})$ is a dual feasible solution. Because the dual objective function is reconstructed to $\sum_{j \in \mathrm{N}} \lambda_{j} - \sum_{i \in \mathrm{M}} \mu_{i} - \tau_{i}$, its value is not changed when $(\lambda, \mu, \tau_{i}) = (\lambda^{*}, \hat{\mu}^{*}, \tau_{i}^{*})$. Therefore, $(\lambda^{*}, \hat{\mu}^{*}, \tau_{i}^{*})$ is an optimal solution to the dual problem of \textbf{(MP)} with the coupling constraints. Thus, $\tau_{i}^{*}$ is the reduced cost of $v_{i}$. 
\end{proof}

\noindent As a result, variable fixing of $v_{i}=0$, or equivalently $y_{i}=1$ is as follows. If $v_{i}$ is equal to zero with  reduced cost $\tau_{i}^{*}$, and $\underline{Z} + \tau_{i}^{*} > \overline{Z}$, then $v_{i}$ can be fixed to zero, or equivalently $y_{i}$ can be fixed to one. 

We note that fixing $x_{ij}$ to one may also be possible using similar approaches. However, the criterion for fixing seems to be more complicated to find, and we need to handle the situation that $x_{ij}=1$ and $y_{i}=0$ during the branch-and-price procedure. Therefore, we did not try fixing $x_{ij}$ to one in our study. 

\subsection{Other implementation issues} \label{section35} 

In the procedure of the branch-and-price algorithm, a depth-first search is applied for traversing the search tree. It is known to have relatively low performance compared to a best-first search, but it is useful to find feasible solutions and upper bounds earlier. Primal heuristics have been used widely in the branch-and-price algorithm to find good upper bounds. Although there have been many studies on the heuristics for the SSCFLP, little research has been reported on the heuristics for the robust SSCFLP under demand uncertainty. Therefore, we did not try using primal heuristics for our algorithm. We focused on verifying the effectiveness of the pure branch-and-price algorithm in our study. 

Infeasibility of \textbf{(RMP)} is also one of the implementation issues of the algorithm. When \textbf{(RMP)} becomes infeasible, the reason can be that \textbf{(MP)} is infeasible, or there are not enough columns to maintain the feasibility of \textbf{(RMP)}. However, it is hard to perceive the exact reason during the column generation procedure. Although there is Farkas pricing \citep{Desrosiers10} to detect whether a master problem is infeasible, it is as hard as optimizing a master problem. To avoid the infeasibility of \textbf{(RMP)} in advance, a dummy facility covering all customers with a very high fixed cost can be added. In our algorithm, We set the value of the fixed cost to two times the sum of all costs, i.e. $2(\sum_{i \in \mathrm{M}} \sum_{j \in \mathrm{N}}$ $c_{ij}$ $+\sum_{i \in \mathrm{M}} f_{i})$. The dummy facility has only one binary variable $z_{\mathrm{N}}^{0}$. If the value of $z_{\mathrm{N}}^{0}$ is nonzero after the algorithm solves \textbf{(RMP)} and there is no column having a negative reduced cost, then \textbf{(RMP)} is infeasible, and the algorithm can prune the node.

\section{Computational experiments} \label{section4} 

In this section, we report the performance  of the proposed branch-and-price algorithm. We implemented the algorithm using C++ with solvers of linear programming problems (CPLEX 12.9) and binary knapsack problems for the master problem and the subproblem, respectively. Computational results of solving the MIP model \textbf{(RP2)} using CPLEX 12.9 are also provided for comparison with our algorithm. Four different sets of the robust SSCFLP problem test instances are considered; The first two sets consist of benchmark instances used in  \citet{Delmaire99} and \citet{Holmberg99}, respectively with additional parameters for the maximum deviations, and the last two sets consist of randomly generated instances for detailed analysis. 

\begin{table}[t]
\caption{Problem size of SSCFLP instances.}
\centering \footnotesize 
\begin{tabular}{c|r|r|r|c}
\hline
set  & Instances    & $m$     & $n$      & S/D       \\ \hline
T1-1 & D1-D6 (6)    & 10    & 20     & 1.32-1.54 \\ 
T1-2 & D7-D17 (11)  & 15    & 30     & 1.33-3.15 \\ 
T1-3 & D18-D25 (8)  & 20    & 40     & 1.30-3.93 \\ 
T1-4 & D26-D33 (8)  & 20    & 50     & 1.27-4.06 \\ 
T1-5 & D34-D41 (8)  & 30    & 60     & 1.64-5.16 \\ 
T1-6 & D42-D49 (8)  & 30    & 70     & 1.43-3.01 \\ 
T1-7 & D50-D57 (8)  & 30    & 90     & 1.49-3.46 \\ \hline
T2-1 & H1-H12 (12)  & 10    & 50     & 1.37-2.06 \\ 
T2-2 & H13-H24 (12) & 20    & 50     & 2.77-3.50 \\ 
T2-3 & H25-H40 (16) & 30    & 150    & 3.03-6.06 \\ 
T2-4 & H41-H55 (15) & 10-30 & 70-100 & 1.52-8.28 \\ 
T2-5 & H56-H71 (16) & 30    & 200    & 1.97-3.95 \\ \hline
T3-1 & (10)         & 30    & 50     & 3.07-5.88 \\ 
T3-2 & (10)         & 30    & 70     & 3.23-5.93 \\ 
T3-3 & (10)         & 50    & 70     & 3.36-5.71 \\ 
T3-4 & (10)         & 50    & 100    & 2.36-4.43 \\ \hline
T4-1 & (10)         & 30    & 50     & 5.62-6.36 \\ 
T4-2 & (10)         & 30    & 70     & 3.92-4.49 \\ 
T4-3 & (10)         & 50    & 70     & 6.98-8.12 \\ 
T4-4 & (10)         & 50    & 100    & 4.45-5.26 \\ \hline
\end{tabular}
\label{table00}
\end{table}

\subsection{Test instances} \label{section41} 

We consider four different sets of test instances of the robust SSCFLP problem. Among them, the first two benchmark sets are directly taken from the previous literature, and the last two sets are generated for the additional experiments and the simulation experiments. The sizes of test instances are listed in Table \ref{table00}. In the table, we classify each test set into several subsets depending on the size of the problem. Also, S/D represents the ratio of the total capacity of all facilities over the total demand of customers. 

The first set (T1) of 57 test instances (D1-D57) ranging from ten candidate facility locations and 20 customers up to 30 locations and 90 customers were proposed by \citet{Delmaire99}. The second set (T2) of 71 test instances (H1-H71) ranging from ten candidate facility locations and 50 customers up to 30 locations and 200 customers were proposed by \citet{Holmberg99}. Test set (T1) and (T2) are divided into seven types and five types, respectively, depending on the instance size. Let $\mathrm{U}\{a,b\}$ be a random variable which has a discrete uniform distribution in $\{a,a+1,\cdots,b\}$. Each maximum deviation $b_{j}$ of customer demand is calculated by $b_{j} = \lfloor d_{j} \cdot \sigma_{j} \rfloor$, where $\sigma_{j}$ is taken from $\mathrm{U}\{100,500\} / 1000$. Each degree of robustness $\Gamma_{i}$ is fixed to 5.

The third set (T3) is generated based on the data generation scheme in \citet{Cornuejols91}. Nominal customer demands $d_j$, $j \in \mathrm{N}$ and capacities of facilities $s_i$, $i \in \mathrm{M}$ are firstly taken from $\mathrm{U}\{5,35\}$ and $\mathrm{U}\{10,160\}$, respectively, and the capacities are expanded by the same factor to adjust the ratio of the sum of capacities to the sum of demands appropriately. Set-up costs and allocation costs are obtained using  $f_i = \lfloor \mathrm{U}\{0,90\} + \mathrm{U}\{100,110\} \sqrt{s_i} \rfloor$ and $c_{ij} = \lfloor 10 d_{j} \cdot e_{ij} \rfloor$, respectively, where $e_{ij}$ is the Euclidean distance between facility $i$ and customer $j$ placed uniformly at random in a unit square. Maximum deviations $b_{j}$ of customer demands are decided as T1 and T2 and degree of robustness $\Gamma_{i}$ varies in $\{3,5,7\}$. 

The fourth set (T4) is generated based on the data generation scheme in \citet{Holmberg99}. Nominal customer demands $d_j$, capacities of facilities $s_i$, and set-up costs $f_i$ are taken from $\mathrm{U}\{10,50\}$, $\mathrm{U}\{100,500\}$, and $\mathrm{U}\{300,700\}$, respectively. Facilities and customers are placed uniformly at random in a square of size $190 \times 190$. Allocation cost $c_{ij}$ of allocating customer $j$ to facility $i$ is obtained by rounding down the Euclidean distance between them. Compared with instances of T2, based on the same reference \citep{Holmberg99}, set-up costs are relatively overvalued to reflect the realistic rates between allocation costs and set-up costs. Parameters $b_{j}$ and $\Gamma_{i}$ involved in the demand uncertainty and robustness are also obtained as T3.

The last two sets are designed so that we can examine the characteristics of instances for which our algorithm works well or not. For test set T3 and T4, we considered four different facility, customer pairs, i.e. $(30, 50)$, $(30, 70)$, $(50, 70)$, and $(50, 100)$. For each pair, we generated ten instances, totaling 40 instances for each test set.

\subsection{Computational results} \label{section42} 

\begin{table*}[t]
\caption{Computational results for instances of T1.}
\centering \tiny
\begin{tabular}{@{\extracolsep{4pt}}*{11}{r}@{}}
\hline
\multirow{2}{*}{set} & \multicolumn{6}{c}{Branch-and-price} & \multicolumn{4}{c}{CPLEX} \\ \cline{2-7} \cline{8-11}
& \#node & \#column & time(s) \textcolor{white}{(0)} & time-m(s) & time-s(s) & gapBP & \#node & time(s) \textcolor{white}{(0)} & gapLP & gapBC \\ \cline{1-7} \cline{8-11}
T1-1 & 30.0 & 216.5 & 0.10 \textcolor{white}{(0)} & 0.02 & 0.07 & 1.61 & 269482.8 & 273.12 (1) & 14.24 & 8.75 \\
T1-2 & 130.5 & 695.4 & 0.94 \textcolor{white}{(0)} & 0.15 & 0.68 & 0.76 & 387637.7 & 1000.40 (5) & 13.09 & 8.30 \\
T1-3 & 268.8 & 1822.9 & 6.47 \textcolor{white}{(0)} & 2.66 & 3.16 & 1.07* & 611844.6 & - (8) & 14.78* & 10.93* \\
T1-4 & 1044.0 & 6021.1 & 57.72 \textcolor{white}{(0)} & 34.69 & 13.39 & 0.81 & 364166.9 & - (8) & 14.13 & 10.04 \\
T1-5 & 21149.0 & 18093.9 & 1010.22 (1) & 589.47 & 216.19 & 2.23 & 420770.9 & - (8) & 19.96 & 17.27 \\
T1-6 & 22411.3 & 30811.5 & 46.77 (6) & 23.96 & 20.32 & 3.27 & 131969.0 & - (8) & 21.47 & 18.95 \\
T1-7 & 11734.0 & 43189.3 & 1984.88 (7) & 1606.20 & 134.44 & 5.37 & 166133.5 & - (8) & 22.51 & 20.32 \\ 
\hline 
\end{tabular}
  { \\ *Because one instance (D21) of T1-3 does not have a feasible solution, it was not included in computing the gaps.  \hfil \hfil \hfil \hfil \hfil }
\label{table01}
   \end{table*}

\begin{table*}[t]
\caption{Computational results for instances of T2.}
\centering \tiny
\begin{tabular}{@{\extracolsep{4pt}}*{11}{r}@{}}
\hline
\multirow{2}{*}{set}  & \multicolumn{6}{c}{Branch-and-price} & \multicolumn{4}{c}{CPLEX} \\ \cline{2-7} \cline{8-11}
& \#node & \#column & time(s) \textcolor{white}{(0)} & time-m(s) & time-s(s) & gapBP & \#node & time(s) \textcolor{white}{(0)} & gapLP & gapBC \\ \cline{1-7} \cline{8-11}
T2-1 & 7.2 & 868.2 & 0.24 \textcolor{white}{(0)} & 0.18 & 0.04 & 0.26 & 818.1 & 0.26 \textcolor{white}{(0)} & 2.47 & 1.62 \\ 
T2-2 & 35.3 & 1278.3 & 0.50 \textcolor{white}{(0)} & 0.36 & 0.07 & 1.19 & 1226.0 & 1.19 \textcolor{white}{(0)} & 3.32 & 1.89 \\
T2-3 & 66.8 & 16395.9 & 318.62 \textcolor{white}{(0)} & 281.68 & 7.45 & 0.81 & 9639.1 & 0.81 \textcolor{white}{(0)} & 1.26 & 0.95 \\
T2-4 & 11.8 & 4135.7 & 15.41 \textcolor{white}{(0)} & 13.75 & 0.52 & 0.28 & 284.4 & 0.28 \textcolor{white}{(0)} & 1.20 & 0.66 \\
T2-5 & 580.5 & 27370.0 & 733.37 (1) & 642.14 & 14.95 & 0.51 & 48461.7 & 70.72 (3) & 1.11 & 0.84 \\ \hline 
\end{tabular}
\label{table02}
\end{table*}

All computational experiments were performed on an Intel\textcircled{R} Core$^{\mathrm{TM}}$ i5-4670 CPU $@$ 3.40GHz PC with 24GB RAM. The branch-and-price algorithm was implemented with C++ programming language using Microsoft visual studio 2015, and it used ILOG CPLEX 12.9 for the LP solver of the algorithm. We also compared our result with the branch-and-cut algorithm solving \textbf{(RP2)} using ILOG CPLEX 12.9. 

In order to compare the experimental results of our branch-and-price algorithm and CPLEX, we report averaged test values for instances in each test set in Table \ref{table01}, Table \ref{table02}, Table \ref{table03}, and Table \ref{table04}. We report the number of nodes in the branch-and-bound tree (\#node), the number of generated columns (\#column), the overall computational time of the algorithm in seconds (time), the time for the master problem (time-m), and the subproblem (time-s), respectively. Also, we report the number of nodes (\#node) and the overall computational time in seconds (time) for CPLEX. We set the time limit to 3,600 seconds for both of the branch-and-price and CPLEX. If the algorithm could not find an optimal solution of an instance within the time limit, the instance was not included in computing the average computational time for (time), (time-m), and (time-s) of branch-and-price or (time) of CPLEX, and the number of unsolved instances is reported in the parentheses in the table. However, it was considered for obtaining the other numerical values.

We also compare the tightness of the LP-relaxation bound of \textbf{(AP)} and \textbf{(RP2)}. Let $Z_{best}^{AP}$ and $Z_{best}^{RP}$ be the optimal or best known objective function value of \textbf{(AP)} and \textbf{(RP2)}, respectively, and let $Z_{best}$ denote the best known objective value for the problem i.e. the smaller of $Z_{best}^{AP}$ and $Z_{best}^{RP}$. Also, Let $Z_{LP}^{AP}$, $Z_{LP}^{RP}$, and $Z_{LP}^{RPr}$ be the LP-relaxation bound of \textbf{(AP)}, \textbf{(RP2)}, and \textbf{(RP2)} with default cutting planes of CPLEX at the root node, respectively. We report the gap between the best known objective function value and the LP-relaxation bound of \textbf{(AP)} i.e. (gapBP) $=(Z_{best}-Z_{LP}^{AP}) / Z_{best} \times 100\%$ for each instance. We also report gaps between $Z_{best}$ and LP-relaxation bound of \textbf{(RP2)} without and with default cutting planes of CPLEX at the root node i.e. (gapLP) $=(Z_{best}-Z_{LP}^{RP}) / Z_{best} \times 100\% $ and (gapBC) $=(Z_{best}-Z_{LP}^{RPr})/ Z_{best} \times 100\%$ for comparison, respectively. The values are averaged and reported for each set of instances, and one problem in T1-3 which does not have a feasible solution was not included in calculating the average values of (gapBP), (gapLP), and (gapBC), respectively.

\begin{table*}[t]
\caption{Computational results for instances of T3.}
\centering \tiny
\begin{tabular}{@{\extracolsep{4pt}}*{12}{r}@{}}
\hline
\multirow{2}{*}{set} & \multirow{2}{*}{$\Gamma_i$} & \multicolumn{6}{c}{Branch-and-price} & \multicolumn{4}{c}{CPLEX} \\ \cline{3-8} \cline{9-12}
 & & \#node & \#column & time(s) \textcolor{white}{(0)} & time-m(s) & time-s(s) & gapBP & \#node & time(s) \textcolor{white}{(0)} & gapLP & gapBC \\ \cline{1-8} \cline{9-12}
\multirow{3}{*}{T3-1} & 3 & 214.4 & 6595.8 & 31.01 \textcolor{white}{(0)} & 19.40 & 7.35 & 1.58 & 63863.7 & 116.3 \textcolor{white}{(0)} & 3.95 & 2.55 \\
 & 5 & 413.6 & 7811.6 & 127.80 \textcolor{white}{(0)} & 106.92 & 11.52 & 1.67 & 509408.3 & 206.76 (3) & 4.80 & 3.24 \\
 & 7 & 418.6 & 8071.9 & 49.31 \textcolor{white}{(0)} & 32.56 & 9.96 & 1.44 & 557845.9 & 98.33 (5) & 4.52 & 3.04 \\ \cline{1-8} \cline{9-12}
\multirow{3}{*}{T3-2} & 3 & 1206.4 & 23909.1 & 209.93 (1) & 363.58 & 39.29 & 1.03 & 117207.7 & 350.06 (1) & 2.32 & 1.48 \\
 & 5 & 843.0 & 16092.6 & 500.25 \textcolor{white}{(0)} & 414.57 & 36.33 & 1.19 & 167151.9 & 72.84 (4) & 2.99 & 2.05 \\
 & 7 & 401.4 & 17985.1 & 188.54 \textcolor{white}{(0)} & 127.23 & 19.68 & 1.29 & 173455.9 & 320.27 (4) & 3.24 & 2.23 \\ \cline{1-8} \cline{9-12}
\multirow{3}{*}{T3-3} & 3 & 6653.4 & 35618.6 & 385.37 (3) & 241.03 & 61.60 & 1.26 & 176574.0 & 53.84 (8) & 4.81 & 3.11 \\
 & 5 & 5649.8 & 23719.5 & 715.73 (3) & 580.76 & 60.94 & 1.55 & 158088.8 & 108.71 (9) & 6.35 & 4.32 \\
 & 7 & 5550.0 & 27125.3 & 843.46 \textcolor{white}{(0)} & 521.74 & 149.35 & 1.02 & 155730.8 & 492.50 (9) & 5.55 & 3.54 \\ \cline{1-8} \cline{9-12}
\multirow{3}{*}{T3-4} & 3 & 3088.8 & 47337.4 & 1294.22 (2) & 854.31 & 163.84 & 1.18 & 162778 & 712.50 (6) & 3.16 & 2.11 \\ 
 & 5 & 2624.2 & 31374.9 & 1101.22 (3) & 908.24 & 103.23 & 1.19 & 143715.8 & 352.10 (7) & 4.01 & 2.58 \\ 
 & 7 & 2874.4 & 46713.1 & 1366.18 (2) & 930.44 & 133.69 & 1.19 & 143715.8 & 1248.93 (7) & 4.18 & 2.60 \\ \hline
\end{tabular}
\label{table03}
\end{table*}

\begin{table*}[t]
\caption{Computational results for instances of T4.}
\centering \tiny
\begin{tabular}{@{\extracolsep{4pt}}*{12}{r}@{}}
\hline
\multirow{2}{*}{set} & \multirow{2}{*}{$\Gamma_i$} & \multicolumn{6}{c}{Branch-and-price} & \multicolumn{4}{c}{CPLEX} \\ \cline{3-8} \cline{9-12}
 & & \#node & \#column & time(s) \textcolor{white}{(0)} & time-m(s) & time-s(s) & gapBP & \#node & time(s) \textcolor{white}{(0)} & gapLP & gapBC \\ \cline{1-8} \cline{9-12}
\multirow{3}{*}{T4-1} & 3 & 27.8 & 2945.7 & 2.84 \textcolor{white}{(0)} & 1.62 & 0.76 & 1.37 & 2310.5 & 2.42 \textcolor{white}{(0)} & 2.46 & 1.85 \\
 & 5 & 34.8 & 3005.2 & 7.80 \textcolor{white}{(0)} & 6.48 & 0.76 & 1.56 & 8060.4 & 8.34 \textcolor{white}{(0)} & 2.94 & 2.53 \\
 & 7 & 54.0 & 3510.2 & 6.69 \textcolor{white}{(0)} & 4.76 & 1.00 & 1.51 & 26153.3 & 29.86 \textcolor{white}{(0)} & 3.02 & 2.49 \\ \cline{1-8} \cline{9-12}
\multirow{3}{*}{T4-2} & 3 & 60.4 & 4830.5 & 11.14 \textcolor{white}{(0)} & 7.28 & 2.45 & 0.90 & 4049.5 & 4.98 \textcolor{white}{(0)} & 2.12 & 1.45 \\
 & 5 & 55.6 & 4829.4 & 22.60 \textcolor{white}{(0)} & 18.53 & 2.81 & 1.62 & 253384.9 & 254.15 (2) & 3.29 & 2.56 \\
 & 7 & 95.4 & 6264.7 & 23.38 \textcolor{white}{(0)} & 16.48 & 3.94 & 1.18 & 60902.4 & 112.41 \textcolor{white}{(0)} & 3.08 & 2.31 \\ \cline{1-8} \cline{9-12}
\multirow{3}{*}{T4-3} & 3 & 65.0 & 5914.3 & 16.07 \textcolor{white}{(0)} & 10.88 & 2.97 & 1.08 & 61910.9 & 353.44 \textcolor{white}{(0)} & 2.20 & 1.61 \\
 & 5 & 93.4 & 6428.2 & 48.87 \textcolor{white}{(0)} & 42.49 & 3.47 & 1.25 & 52953.9 & 417.51 \textcolor{white}{(0)} & 2.69 & 2.11 \\
 & 7 & 123.4 & 8560.8 & 36.93 \textcolor{white}{(0)} & 27.38 & 4.64 & 1.41 & 85825.4 & 360.41 (2) & 2.85 & 2.35 \\  \cline{1-8} \cline{9-12}
\multirow{3}{*}{T4-4} & 3 & 319.0 & 14698.4 & 109.30 \textcolor{white}{(0)} & 73.30 & 16.80 & 1.06 & 101862.5 & 127.78 (1) & 2.21 & 1.60 \\  
 & 5 & 196.8 & 11365.5 & 150.11 \textcolor{white}{(0)} & 127.71 & 12.44 & 0.85 & 83178.2 & 520.02 (2) & 2.51 & 1.77 \\  
 & 7 & 349.2 & 16484.4 & 133.92 \textcolor{white}{(0)} & 91.29 & 20.26 & 1.09 & 168073.0 & 998.60 (5) & 2.95 & 2.13 \\ \hline 
\end{tabular}
\label{table04}
\end{table*}

Table \ref{table01} and Table \ref{table02} present computational results of the branch-and-price algorithm and CPLEX for the benchmark instances of T1 and T2, respectively. Table 2 illustrates that our branch-and-price algorithm outperforms CPLEX by a wide margin for the instances of T1. The algorithm of CPLEX could obtain optimal solutions for only ten instances out of 57 instances, while our algorithm could find optimal solutions for 42 instances. Moreover, CPLEX could find an optimal solution faster than our algorithm for only one instance. Also, among the 15 instances unsolved within 3,600 seconds by our algorithm, CPLEX could find better solutions than our algorithm for only three instances. Overall, the branch-and-price algorithm is better than CPLEX for 53 instances out of 57 instances.  

However, the computational results for the instances of T2 in Table 3 show the opposite results, unlike the first ones. Our algorithm is better than CPLEX in terms of solving time for only 22 out of 71 instances, although our branch-and-price algorithm obtains optimal solutions for two of the three instances, which can not be exactly solved by CPLEX within 3,600 seconds. 

These conflicting results of computational experiments for two types of benchmark instances can be explained by the gap values between the upper and lower bounds for the MIP reformulations. For the instances of T1, the average value of the gap is 2.1\% for \textbf{(AP)} while it is 17.1\% for \textbf{(RP2)}. These fundamental differences appear in all experimental results of instances of T1, and CPLEX could not close this as much. CPLEX could decrease the average value of the gap to 13.4\% after adding the default cutting planes at the root node. However, for the instances of T2, the average value of the gap is 0.6\% for \textbf{(AP)} while it is 1.8\% for \textbf{(RP2)}, and the gap is decreased to 1.1\% on average after adding the default cutting planes of CPLEX. As a result, instances of T2 have relatively small gap values for \textbf{(RP2)}. Hence, they can be easily solved by CPLEX except for some large-sized instances.  
 
We also note that the number of nodes generated in the branch-and-bound tree was very small in our algorithm compared to CPLEX for all test instances. This may be due to the stronger bound provided by the LP-relaxation of \textbf{(AP)}, and such tendency may grow as we solve larger problems.

Table \ref{table03} and Table \ref{table04} present computational results of the branch-and-price algorithm and CPLEX for the randomly generated instances of T3 and T4, respectively. In total, there are 77 and 58 instances of T3 and T4, respectively, that our algorithm outperforms CPLEX in terms of the computational time and the quality of feasible solutions. For each gamma value, our branch and price algorithm outperforms CPLEX when $\Gamma_i=5$ (19 and 26 instances of T3 and T4, respectively) and $\Gamma_i=7$ (27 and 29 instances of T3 and T4, respectively), and our algorithm slightly underperforms CPLEX when $\Gamma_i=3$ (12 and 23 instances of T3 and T4, respectively). 

Moreover, our branch and price algorithm could solve 106 out of 120 instances of T3 and all 120 out of 120 instances of T4, while CPLEX could solve 57 instances of T3 and 108 instances of T4 within 3,600 seconds. In the case of the instances of T3, our branch and price algorithm could solve 34($\Gamma_i=3$), 34($\Gamma_i=5$), and 38($\Gamma_i=7)$ instances of T3, but CPLEX could solve 25($\Gamma_i=3$), 17($\Gamma_i=5$), and 15($\Gamma_i=7$). It shows that our algorithm maintains almost the same performance, but the performance of CPLEX decreases significantly when the value of gamma increases. In conclusion, our algorithm has solved the problems that CPLEX could not easily solve, although CPLEX showed better performance in terms of the computational time for some instances, mostly small-sized ones.

This difference in performance can be due to the gaps between the best known objective function value and the LP-relaxation bound. The gap for \textbf{(AP)} seems to be almost independent of the value of gamma, while the gap for \textbf{(RP2)} tends to grow proportional to the value of gamma. 

\section{Simulation experiments for evaluation of robust optimal solutions} \label{section5} 

In this section, we report the results of simulation experiments to evaluate the robustness of the solutions of the robust SSCFLP. Trade-off between the robustness of the solutions and additional costs incurred is verified. It illustrates that the robust SSCFLP can deal with the demand uncertainty efficiently with minimal additional costs.  

\subsection{Design of experiments} \label{section51} 

For the simulation experiments, We generated two benchmark instances I3 and I4, which have 30 candidate facility locations and 70 customers, like as the instances of T3 and T4 in the previous section, respectively.

We solved each instance of the robust SSCFLP for all $\sigma_{j} \in \{0\%,$ $10\%,$ $20\%,$ $30\%,$ $40\%,$ $50\%\}$ and $\Gamma_{i} \in \{0,$ $1,$ $2,$ $3,$ $4,$ $5\}$. The two types of parameters, rate of the maximum possible variations $\sigma_{j}$ and degree of robustness $\Gamma_{i}$, control the level of robustness for the optimal solutions. When all $\sigma_{j}$ and $\Gamma_{i}$ are equal to zero, a solution of the original SSCFLP without demand uncertainty is obtained. We compared the robust solutions to the nominal solution in terms of penalty costs, additional available capacities, and the robustness of solutions. 

The robustness of solutions was measured by the empirical ratio of infeasibility using the Monte Carlo simulation. In the simulation, the demand of each customer $j \in \mathrm{N}$ is generated from the truncated normal distribution derived from the normal distribution with mean $d_j$ and standard deviation $d_j \cdot \Delta$, by cutting off the lower tail under $d_j \cdot (1 - 2  \Delta)$ in the normal distribution, where $\Delta$ is the level of variability in demands. The lower truncation of normal distribution prevents ridiculously small or negative value of demand. For each demand scenario, the feasibility of a scenario was confirmed by checking whether every opened facility could accommodate the demands of the assigned customers or not. The ratio of infeasibility was obtained by dividing the number of infeasible scenarios by 5,000 demand scenarios. We did the simulation with varying the level of variability in demands  $\Delta \in \{0, 0.05, \cdots, 0.40 \}$ for each robust solution.

\begin{figure}
\begin{subfigure}{.5\textwidth}
  \centering
  \begin{tikzpicture}[only marks, y=1cm, scale=0.75]
\begin{axis}[   width=6cm, height=6cm,  scale only axis, xmin=0, xmax=42,xlabel={$\Delta$},
                xtick={0,5,10,15,20,25,30,35,40},               
                xticklabels={0,,0.1,,0.2,,0.3,,0.4},
                xmajorgrids, ymin=0, ymax=100,
                ylabel={infeasibility (\%)},
                ytick={0,5,10,15,20,25,30,35,40,45,50,55,60,65,70,75,80,85,90,95,100},               
                yticklabels={0,,,,20,,,,40,,,,60,,,,80,,,,100},               
                ymajorgrids,axis lines*=left,line width=1.0pt,mark size=2.0pt,
                legend style={at={(1.1,1)},anchor=north west,draw=black,fill=white,align=left}
                ]
                
\addplot [color=black,solid,mark=*,mark options={solid},smooth]
    coordinates  {(0,0) (5,67.7) (10,76.9) (15,79.4) (20,80.9) (25,83.4) (30,84.8) (35,85.8) (40,87.0)};
    \addlegendentry{$\Gamma_{i}=0$};
\draw [latex-latex,semithick] (axis cs:5,67.7) -- node[pos=0.5,fill=white]{\tiny 63.6\%} (axis cs:5,4.1); 
\draw [latex-latex,semithick] (axis cs:10,76.9) -- node[pos=0.5,fill=white]{\tiny 37.9\%} (axis cs:10,39.0);
\draw [latex-latex,semithick] (axis cs:15,79.4) -- node[pos=0.5,fill=white]{\tiny 17.4\%} (axis cs:15,62.0);
\draw [latex-latex,semithick] (axis cs:20,80.9) -- node[pos=0.5,fill=white]{\tiny 10.2\%} (axis cs:20,70.8);

\addplot [color=black,dotted,mark=*,mark options={solid},smooth]
    coordinates {(0,0) (5,4.1) (10,39.0) (15,62.0) (20,70.8) (25,77.5) (30,80.4) (35,83.4) (40,85.4)};
    \addlegendentry{$\Gamma_{i}=1$};
\draw [latex-latex,semithick] (axis cs:10,39.0) -- node[pos=0.5,fill=white]{\tiny 32.2\%} (axis cs:10,6.8);      
\draw [latex-latex,semithick] (axis cs:15,62.0) -- node[pos=0.5,fill=white]{\tiny 32.1\%} (axis cs:15,29.9);
\draw [latex-latex,semithick] (axis cs:20,70.8) -- node[pos=0.5,fill=white]{\tiny 23.4\%} (axis cs:20,47.3);
\draw [latex-latex,semithick] (axis cs:25,77.5) -- node[pos=0.5,fill=white]{\tiny 15.4\%} (axis cs:25,62.2);
\draw [latex-latex,semithick] (axis cs:30,80.4) -- node[pos=0.5,fill=white]{\tiny 11.1\%} (axis cs:30,69.3);
               
\addplot [color=black,dash pattern=on 1pt off 3pt on 3pt off 3pt,mark=*,mark options={solid},smooth]
                coordinates {(0,0) (5,0) (10,6.8) (15,29.9) (20,47.3) (25,62.2) (30,69.3) (35,75.3) (40,80.4)};
                \addlegendentry{$\Gamma_{i}=2$};
\draw [latex-latex,semithick] (axis cs:15,29.9) -- node[pos=0.5,fill=white]{\tiny 24.7\%} (axis cs:15,5.2);
\draw [latex-latex,semithick] (axis cs:20,47.3) -- node[pos=0.5,fill=white]{\tiny 30.9\%} (axis cs:20,16.4);
\draw [latex-latex,semithick] (axis cs:25,62.2) -- node[pos=0.5,fill=white]{\tiny 32.0\%} (axis cs:25,30.1);
\draw [latex-latex,semithick] (axis cs:30,69.3) -- node[pos=0.5,fill=white]{\tiny 27.8\%} (axis cs:30,41.6); 
\draw [latex-latex,semithick] (axis cs:35,75.3) -- node[pos=0.5,fill=white]{\tiny 24.9\%} (axis cs:35,50.4); 
\draw [latex-latex,semithick] (axis cs:40,80.4) -- node[pos=0.5,fill=white]{\tiny 20.2\%} (axis cs:40,60.2);         
                
\addplot [color=black,dashed,mark=*,mark options={solid},smooth]
                coordinates {(0,0) (5,0) (10,0.2) (15,5.2) (20,16.4) (25,30.1) (30,41.6) (35,50.4) (40,60.2) };
                \addlegendentry{$\Gamma_{i}=3$};
\draw [latex-latex,semithick] (axis cs:20,16.4) -- node[pos=0.5,fill=white]{\tiny 13.5\%} (axis cs:20,3.0);
\draw [latex-latex,semithick] (axis cs:25,30.1) -- node[pos=0.5,fill=white]{\tiny 19.6\%} (axis cs:25,10.6);
\draw [latex-latex,semithick] (axis cs:30,41.6) -- node[pos=0.5,fill=white]{\tiny 20.1\%} (axis cs:30,21.4); 
\draw [latex-latex,semithick] (axis cs:35,50.4) -- node[pos=0.5,fill=white]{\tiny 19,2\%} (axis cs:35,31.3); 
\draw [latex-latex,semithick] (axis cs:40,60.2) -- node[pos=0.5,fill=white]{\tiny 16.9\%} (axis cs:40,43.3);                      
\addplot [color=black,dash pattern=on 3pt off 6pt on 6pt off 6pt,mark=*,mark options={solid},smooth]
                coordinates {(0,0) (5,0) (10,0) (15,0.3) (20,3.0) (25,10.6) (30,21.4) (35,31.3) (40,43.3)};
                \addlegendentry{$\Gamma_{i}=4$};
\addplot [color=black,dotted,mark=*,mark options={solid},smooth]
                coordinates {(0,0) (5,0) (10,0) (15,0.2) (20,2.3) (25,9.8) (30,19.3) (35,29.1) (40,42.1)};
                \addlegendentry{$\Gamma_{i}=5$};                                                                     
\end{axis}
\end{tikzpicture}  
  \caption{I3}
  \label{fig01}
\end{subfigure} 
\begin{subfigure}{.5\textwidth}
  \centering
  \begin{tikzpicture}[only marks, y=1cm, scale=0.75]
\begin{axis}[   width=6cm, height=6cm,  scale only axis, xmin=0, xmax=42,xlabel={$\Delta$},
                xtick={0,5,10,15,20,25,30,35,40},               
                xticklabels={0,,0.1,,0.2,,0.3,,0.4},
                xmajorgrids, ymin=0, ymax=100,
                ylabel={infeasibility (\%)},
                ytick={0,5,10,15,20,25,30,35,40,45,50,55,60,65,70,75,80,85,90,95,100},               
                yticklabels={0,,,,20,,,,40,,,,60,,,,80,,,,100},               
                ymajorgrids,axis lines*=left,line width=1.0pt,mark size=2.0pt,
                legend style={at={(1.1,1)},anchor=north west,draw=black,fill=white,align=left}
                ]
                
\addplot [color=black,solid,mark=*,mark options={solid},smooth]
    coordinates  {(0,0) (5,78.4) (10,92.3) (15,95.7) (20,96.3) (25,97.0) (30,97.1) (35,97.3) (40,97.5)};
    \addlegendentry{$\Gamma_{i}=0$};
\draw [latex-latex,semithick] (axis cs:5,78.4) -- node[pos=0.5,fill=white]{\tiny 76.8\%} (axis cs:5,1.7);
\draw [latex-latex,semithick] (axis cs:10,92.3) -- node[pos=0.5,fill=white]{\tiny 56.2\%} (axis cs:10,36.1);
\draw [latex-latex,semithick] (axis cs:15,95.7) -- node[pos=0.5,fill=white]{\tiny 30.6\%} (axis cs:15,65.1);
\draw [latex-latex,semithick] (axis cs:20,96.3) -- node[pos=0.5,fill=white]{\tiny 18.6\%} (axis cs:20,77.7);
\draw [latex-latex,semithick] (axis cs:25,97.0) -- node[pos=0.5,fill=white]{\tiny 12.4\%} (axis cs:25,84.6);

\addplot [color=black,dotted,mark=*,mark options={solid},smooth]
    coordinates {(0,0) (5,1.7) (10,36.1) (15,65.1) (20,77.7) (25,84.6) (30,88.5) (35,90.6) (40,92.3)};
    \addlegendentry{$\Gamma_{i}=1$};
\draw [latex-latex,semithick] (axis cs:10,36.1) -- node[pos=0.5,fill=white]{\tiny 35.0\%} (axis cs:10,1.1);
\draw [latex-latex,semithick] (axis cs:15,65.1) -- node[pos=0.5,fill=white]{\tiny 51.6\%} (axis cs:15,13.5);
\draw [latex-latex,semithick] (axis cs:20,77.7) -- node[pos=0.5,fill=white]{\tiny 47.1\%} (axis cs:20,30.6);
\draw [latex-latex,semithick] (axis cs:25,84.6) -- node[pos=0.5,fill=white]{\tiny 36.3\%} (axis cs:25,48.2);
\draw [latex-latex,semithick] (axis cs:30,88.5) -- node[pos=0.5,fill=white]{\tiny 26.9\%} (axis cs:30,61.6);
\draw [latex-latex,semithick] (axis cs:35,90.6) -- node[pos=0.5,fill=white]{\tiny 21.1\%} (axis cs:35,69.5);
\draw [latex-latex,semithick] (axis cs:40,92.3) -- node[pos=0.5,fill=white]{\tiny 16.8\%} (axis cs:40,75.5);
               
\addplot [color=black,dash pattern=on 1pt off 3pt on 3pt off 3pt,mark=*,mark options={solid},smooth]
                coordinates {(0,0) (5,0) (10,1.1) (15,13.5) (20,30.6) (25,48.2) (30,61.6) (35,69.5) (40,75.5)};
                \addlegendentry{$\Gamma_{i}=2$}; 
\draw [latex-latex,semithick] (axis cs:20,30.6) -- node[pos=0.5,fill=white]{\tiny 15.1\%} (axis cs:20,15.5);
\draw [latex-latex,semithick] (axis cs:25,48.2) -- node[pos=0.5,fill=white]{\tiny 16.1\%} (axis cs:25,32.2);
\draw [latex-latex,semithick] (axis cs:30,61.6) -- node[pos=0.5,fill=white]{\tiny 13.9\%} (axis cs:30,47.7);
\draw [latex-latex,semithick] (axis cs:35,69.5) -- node[pos=0.5,fill=white]{\tiny 12.0\%} (axis cs:35,57.5);      
                
\addplot [color=black,dashed,mark=*,mark options={solid},smooth]
                coordinates {(0,0) (5,0) (10,0.1) (15,4.1) (20,15.5) (25,32.2) (30,47.7) (35,57.5) (40,67.5) };
                \addlegendentry{$\Gamma_{i}=3$};   
\draw [latex-latex,semithick] (axis cs:20,15.5) -- node[pos=0.5,fill=white]{\tiny 11.1\%} (axis cs:20,4.4);
\draw [latex-latex,semithick] (axis cs:25,32.2) -- node[pos=0.5,fill=white]{\tiny 17.7\%} (axis cs:25,14.5);
\draw [latex-latex,semithick] (axis cs:30,47.7) -- node[pos=0.5,fill=white]{\tiny 19.8\%} (axis cs:30,27.9);
\draw [latex-latex,semithick] (axis cs:35,57.5) -- node[pos=0.5,fill=white]{\tiny 17.4\%} (axis cs:35,40.1);  
\draw [latex-latex,semithick] (axis cs:40,67.5) -- node[pos=0.5,fill=white]{\tiny 16.4\%} (axis cs:40,51.0);                     
\addplot [color=black,dash pattern=on 3pt off 6pt on 6pt off 6pt,mark=*,mark options={solid},smooth]
                coordinates {(0,0) (5,0) (10,0) (15,0.7) (20,4.4) (25,14.5) (30,27.9) (35,40.1) (40,51.0)};
                \addlegendentry{$\Gamma_{i}=4$};
\addplot [color=black,dotted,mark=*,mark options={solid},smooth]
                coordinates {(0,0) (5,0) (10,0) (15,0.1) (20,1.6) (25,7.9) (30,20.0) (35,31.6) (40,45.0)};
                \addlegendentry{$\Gamma_{i}=5$};                                                                     
\end{axis}
\end{tikzpicture}  
  
  \caption{I4}
  \label{fig02}
\end{subfigure}

\caption{The percentage ratio of infeasible scenarios when $\sigma_{j}=30\%$.}
\label{fig0102}
\end{figure}
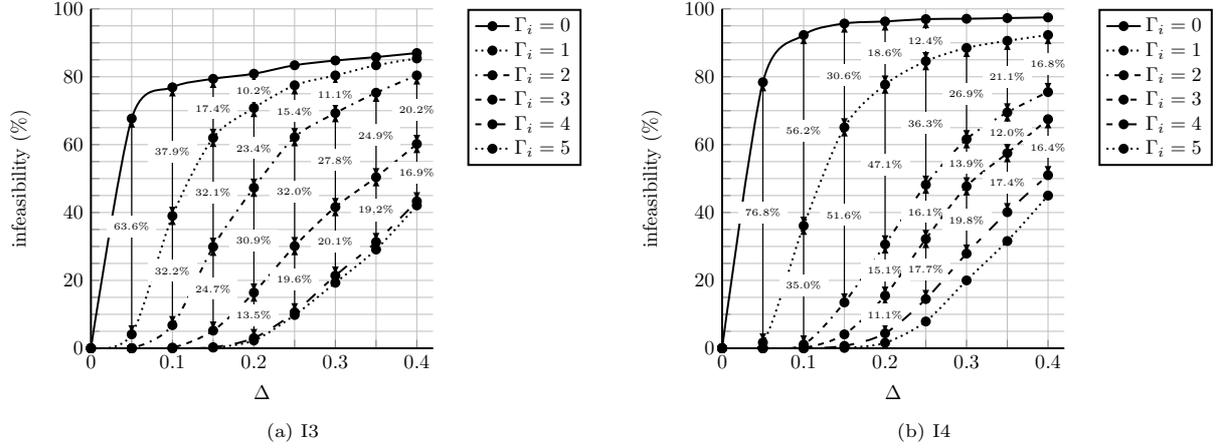

\subsection{Results of experiments and analysis} \label{section52} 

Figure \ref{fig01} and Figure \ref{fig02} illustrate the percentage ratio of infeasibility under the same rate of maximum possible variation $\sigma_j = 30\%$ for I3 and I4, respectively. It shows that the robustness of solutions depends on the degree of robustness $\Gamma_i=0,1,2,3,4,5$ and the rate of demand variation scenario $\Delta=0, 0.05, \cdots, 0.40$. The ratio of infeasibility becomes smaller as the degree of robustness increases. For example, when the rate of demand variation $\Delta$ is equal to $0.05$, the ratio of infeasibility can be improved by $63.6\%$ on I3 and $76.8\%$ on I4 by increasing the value of $\Gamma_i$ from zero to one, respectively. For the better result, we need to pay $0.5\%$ and $1.8\%$ additional penalty costs for I3 and I4 compared to the nominal solution without demand uncertainty, respectively. We also report that the additional penalty costs are $5.3\%$, $6.4\%$, $7.5\%$, and $8.5\%$ for I3 and $3.7\%$, $5.0\%$, $6.6\%$, and $8.0\%$ for I4 when $\Gamma_{i}=2,3,4,5$, respectively. Actually, when the rate of demand variation $\Delta$ increases, the degree of robustness $\Gamma_{i}$ should be higher for improving the rate of feasibility, and it increases the penalty costs. However, we can observe that the penalty costs are not large, compared with the improvement on the robustness of the solutions. 

\begin{figure}
\begin{subfigure}{.5\textwidth}
  \centering
  \begin{tikzpicture}[only marks, y=1cm, scale=0.75]
\begin{axis}[   width=6cm, height=6cm,  scale only axis, xmin=0, xmax=42,xlabel={$\Delta$},
                xtick={0,5,10,15,20,25,30,35,40},               
                xticklabels={0,,0.1,,0.2,,0.3,,0.4},
                xmajorgrids, ymin=0, ymax=100,
                ylabel={infeasibility (\%)},
                ytick={0,5,10,15,20,25,30,35,40,45,50,55,60,65,70,75,80,85,90,95,100},               
                yticklabels={0,,,,20,,,,40,,,,60,,,,80,,,,100},               
                ymajorgrids,axis lines*=left,line width=1.0pt,mark size=2.0pt,
                legend style={at={(1.1,1)},anchor=north west,draw=black,fill=white,align=left}
                ]
                
\addplot [color=black,solid,mark=*,mark options={solid},smooth]
    coordinates  {(0,0) (5,67.7) (10,76.9) (15,79.4) (20,80.9) (25,83.4) (30,84.8) (35,85.8) (40,87.0)};
    \addlegendentry{$\sigma_{j}=0\%$};
\draw [latex-latex,semithick] (axis cs:5,67.7) -- node[pos=0.5,fill=white]{\tiny 62.8\%} (axis cs:5,4.9);
\draw [latex-latex,semithick] (axis cs:10,76.9) -- node[pos=0.5,fill=white]{\tiny 37.5\%} (axis cs:10,39.4);
\draw [latex-latex,semithick] (axis cs:15,79.4) -- node[pos=0.5,fill=white]{\tiny 17.3\%} (axis cs:15,62.1);

\addplot [color=black,dotted,mark=*,mark options={solid},smooth]
    coordinates {(0,0) (5,4.9) (10,39.4) (15,62.1) (20,71.2) (25,77.2) (30,80.8) (35,83.2) (40,85.3)};
    \addlegendentry{$\sigma_{j}=10\%$};
\draw [latex-latex,semithick] (axis cs:10,39.4) -- node[pos=0.5,fill=white]{\tiny 32.7\%} (axis cs:10,6.8);
\draw [latex-latex,semithick] (axis cs:15,62.1) -- node[pos=0.5,fill=white]{\tiny 30.7\%} (axis cs:15,31.4);
\draw [latex-latex,semithick] (axis cs:20,71.2) -- node[pos=0.5,fill=white]{\tiny 21.9\%} (axis cs:20,49.4);
\draw [latex-latex,semithick] (axis cs:25,77.2) -- node[pos=0.5,fill=white]{\tiny 12.0\%} (axis cs:25,65.3);
               
\addplot [color=black,dash pattern=on 1pt off 3pt on 3pt off 3pt,mark=*,mark options={solid},smooth]
                coordinates {(0,0) (5,0) (10,6.8) (15,31.4) (20,49.4) (25,65.3) (30,72.9) (35,77.2) (40,81.3)};
                \addlegendentry{$\sigma_{j}=20\%$};    
\draw [latex-latex,semithick] (axis cs:15,31.4) -- node[pos=0.5,fill=white]{\tiny 26.2\%} (axis cs:15,5.2);
\draw [latex-latex,semithick] (axis cs:20,49.4) -- node[pos=0.5,fill=white]{\tiny 32.9\%} (axis cs:20,16.4);
\draw [latex-latex,semithick] (axis cs:25,65.3) -- node[pos=0.5,fill=white]{\tiny 35.1\%} (axis cs:25,30.1);
\draw [latex-latex,semithick] (axis cs:30,72.9) -- node[pos=0.5,fill=white]{\tiny 31.3\%} (axis cs:30,41.6);
\draw [latex-latex,semithick] (axis cs:35,77.2) -- node[pos=0.5,fill=white]{\tiny 26.8\%} (axis cs:35,50.4);
\draw [latex-latex,semithick] (axis cs:40,81.3) -- node[pos=0.5,fill=white]{\tiny 21.1\%} (axis cs:40,60.2);
                
\addplot [color=black,dashed,mark=*,mark options={solid},smooth]
                coordinates {(0,0) (5,0) (10,0.2) (15,5.2) (20,16.4) (25,30.1) (30,41.6) (35,50.4) (40,60.2) };
                \addlegendentry{$\sigma_{j}=30\%$};                     
\draw [latex-latex,semithick] (axis cs:20,16.4) -- node[pos=0.5,fill=white]{\tiny 12.3\%} (axis cs:20,4.2);
\draw [latex-latex,semithick] (axis cs:25,30.1) -- node[pos=0.5,fill=white]{\tiny 17.5\%} (axis cs:25,12.6);
\draw [latex-latex,semithick] (axis cs:30,41.6) -- node[pos=0.5,fill=white]{\tiny 19.1\%} (axis cs:30,22.5);
\draw [latex-latex,semithick] (axis cs:35,50.4) -- node[pos=0.5,fill=white]{\tiny 18.2\%} (axis cs:35,32.2);
\draw [latex-latex,semithick] (axis cs:40,60.2) -- node[pos=0.5,fill=white]{\tiny 16.4\%} (axis cs:40,43.7);
\addplot [color=black,dash pattern=on 3pt off 6pt on 6pt off 6pt,mark=*,mark options={solid},smooth]
                coordinates {(0,0) (5,0) (10,0) (15,0.6) (20,4.2) (25,12.6) (30,22.5) (35,32.2) (40,43.7)};
                \addlegendentry{$\sigma_{j}=40\%$};
\draw [latex-latex,semithick] (axis cs:30,22.5) -- node[pos=0.5,fill=white]{\tiny 13.8\%} (axis cs:30,8.7);
\draw [latex-latex,semithick] (axis cs:35,32.2) -- node[pos=0.5,fill=white]{\tiny 15.7\%} (axis cs:35,16.6);
\draw [latex-latex,semithick] (axis cs:40,43.7) -- node[pos=0.5,fill=white]{\tiny 17.9\%} (axis cs:40,25.8);
\addplot [color=black,dotted,mark=*,mark options={solid},smooth]
                coordinates {(0,0) (5,0) (10,0) (15,0) (20,0.5) (25,3.5) (30,8.7) (35,16.6) (40,25.8)};
                \addlegendentry{$\sigma_{j}=50\%$};                                                                     
\end{axis}
\end{tikzpicture}  
  \caption{I3}
  \label{fig03}
\end{subfigure}
\begin{subfigure}{.5\textwidth}
  \centering
\begin{tikzpicture}[only marks, y=1cm, scale=0.75]
\begin{axis}[   width=6cm, height=6cm,  scale only axis, xmin=0, xmax=42,xlabel={$\Delta$},
                xtick={0,5,10,15,20,25,30,35,40},               
                xticklabels={0,,0.1,,0.2,,0.3,,0.4},
                xmajorgrids, ymin=0, ymax=100,
                ylabel={infeasibility (\%)},
                ytick={0,5,10,15,20,25,30,35,40,45,50,55,60,65,70,75,80,85,90,95,100},               
                yticklabels={0,,,,20,,,,40,,,,60,,,,80,,,,100},               
                ymajorgrids,axis lines*=left,line width=1.0pt,mark size=2.0pt,
                legend style={at={(1.1,1)},anchor=north west,draw=black,fill=white,align=left}
                ]
                
\addplot [color=black,solid,mark=*,mark options={solid},smooth]
    coordinates  {(0,0) (5,78.4) (10,92.3) (15,95.7) (20,96.3) (25,97.0) (30,97.1) (35,97.3) (40,97.5)};
    \addlegendentry{$\sigma_{j}=0\%$};
\draw [latex-latex,semithick] (axis cs:5,78.4) -- node[pos=0.5,fill=white]{\tiny 73.8\%} (axis cs:5,4.7);
\draw [latex-latex,semithick] (axis cs:10,92.3) -- node[pos=0.5,fill=white]{\tiny 49.8\%} (axis cs:10,42.6);
\draw [latex-latex,semithick] (axis cs:15,95.7) -- node[pos=0.5,fill=white]{\tiny 25.0\%} (axis cs:15,70.7);
\draw [latex-latex,semithick] (axis cs:20,96.3) -- node[pos=0.5,fill=white]{\tiny 15.0\%} (axis cs:20,81.3);
    
\addplot [color=black,dotted,mark=*,mark options={solid},smooth]
    coordinates {(0,0) (5,4.7) (10,42.6) (15,70.7) (20,81.3) (25,87.9) (30,90.5) (35,92.3) (40,93.1)};
    \addlegendentry{$\sigma_{j}=10\%$};
\draw [latex-latex,semithick] (axis cs:10,42.6) -- node[pos=0.5,fill=white]{\tiny 41.0\%} (axis cs:10,1.5);
\draw [latex-latex,semithick] (axis cs:15,70.7) -- node[pos=0.5,fill=white]{\tiny 52.2\%} (axis cs:15,18.5);
\draw [latex-latex,semithick] (axis cs:20,81.3) -- node[pos=0.5,fill=white]{\tiny 43.5\%} (axis cs:20,37.8);
\draw [latex-latex,semithick] (axis cs:25,87.9) -- node[pos=0.5,fill=white]{\tiny 31.8\%} (axis cs:25,56.1);
\draw [latex-latex,semithick] (axis cs:30,90.5) -- node[pos=0.5,fill=white]{\tiny 22.8\%} (axis cs:30,67.7);
\draw [latex-latex,semithick] (axis cs:35,92.3) -- node[pos=0.5,fill=white]{\tiny 17.5\%} (axis cs:35,74.8);
\draw [latex-latex,semithick] (axis cs:40,93.1) -- node[pos=0.5,fill=white]{\tiny 12.9\%} (axis cs:40,80.2);
               
\addplot [color=black,dash pattern=on 1pt off 3pt on 3pt off 3pt,mark=*,mark options={solid},smooth]
                coordinates {(0,0) (5,0) (10,1.5) (15,18.5) (20,37.8) (25,56.1) (30,67.7) (35,74.8) (40,80.2)};
                \addlegendentry{$\sigma_{j}=20\%$};    
\draw [latex-latex,semithick] (axis cs:15,18.5) -- node[pos=0.5,fill=white]{\tiny 14.3\%} (axis cs:15,4.1);
\draw [latex-latex,semithick] (axis cs:20,37.8) -- node[pos=0.5,fill=white]{\tiny 22.3\%} (axis cs:20,15.5);
\draw [latex-latex,semithick] (axis cs:25,56.1) -- node[pos=0.5,fill=white]{\tiny 23.9\%} (axis cs:25,32.2);
\draw [latex-latex,semithick] (axis cs:30,67.7) -- node[pos=0.5,fill=white]{\tiny 20.0\%} (axis cs:30,47.7);
\draw [latex-latex,semithick] (axis cs:35,74.8) -- node[pos=0.5,fill=white]{\tiny 17.3\%} (axis cs:35,57.5);
\draw [latex-latex,semithick] (axis cs:40,80.2) -- node[pos=0.5,fill=white]{\tiny 12.7\%} (axis cs:40,67.5);
                
\addplot [color=black,dashed,mark=*,mark options={solid},smooth]
                coordinates {(0,0) (5,0) (10,0.1) (15,4.1) (20,15.5) (25,32.2) (30,47.7) (35,57.5) (40,67.5) };
                \addlegendentry{$\sigma_{j}=30\%$};   
\draw [latex-latex,semithick] (axis cs:20,15.5) -- node[pos=0.5,fill=white]{\tiny 12.0\%} (axis cs:20,3.5);
\draw [latex-latex,semithick] (axis cs:25,32.2) -- node[pos=0.5,fill=white]{\tiny 19.4\%} (axis cs:25,12.7);
\draw [latex-latex,semithick] (axis cs:30,47.7) -- node[pos=0.5,fill=white]{\tiny 20.8\%} (axis cs:30,26.9);
\draw [latex-latex,semithick] (axis cs:35,57.5) -- node[pos=0.5,fill=white]{\tiny 17.9\%} (axis cs:35,39.6);
\draw [latex-latex,semithick] (axis cs:40,67.5) -- node[pos=0.5,fill=white]{\tiny 15.7\%} (axis cs:40,51.8);        
                
\addplot [color=black,dash pattern=on 3pt off 6pt on 6pt off 6pt,mark=*,mark options={solid},smooth]
                coordinates {(0,0) (5,0) (10,0) (15,0.3) (20,3.5) (25,12.7) (30,26.9) (35,39.6) (40,51.8)};
                \addlegendentry{$\sigma_{j}=40\%$};
\draw [latex-latex,semithick] (axis cs:25,12.7) -- node[pos=0.5,fill=white]{\tiny 11.0\%} (axis cs:25,1.8);
\draw [latex-latex,semithick] (axis cs:30,26.9) -- node[pos=0.5,fill=white]{\tiny 19.4\%} (axis cs:30,7.5);
\draw [latex-latex,semithick] (axis cs:35,39.6) -- node[pos=0.5,fill=white]{\tiny 23.9\%} (axis cs:35,15.7);
\draw [latex-latex,semithick] (axis cs:40,51.8) -- node[pos=0.5,fill=white]{\tiny 25.1\%} (axis cs:40,26.7);  
                
\addplot [color=black,dotted,mark=*,mark options={solid},smooth]
                coordinates {(0,0) (5,0) (10,0) (15,0) (20,0.2) (25,1.8) (30,7.5) (35,15.7) (40,26.7)};
                \addlegendentry{$\sigma_{j}=50\%$};                                                                     
\end{axis}
\end{tikzpicture}  
  \caption{I4}
  \label{fig04}
\end{subfigure}
\caption{The percentage ratio of infeasible scenarios when $\Gamma_{i}=3$.}
\label{fig0304}
\end{figure}
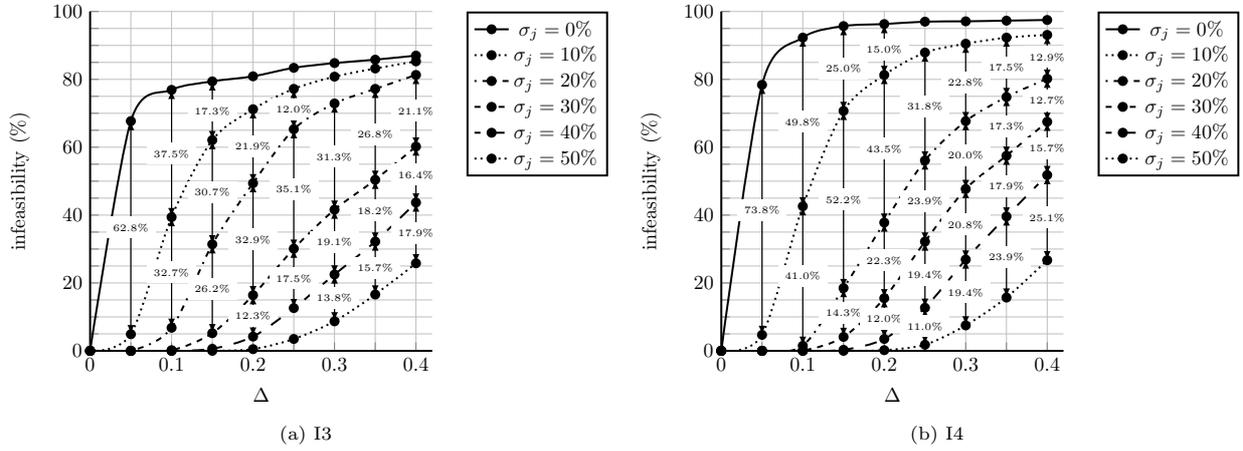

Figure \ref{fig03} and Figure \ref{fig04} illustrate the ratio of infeasibility under the same degree of robustness $\Gamma_{i} = 3$ for I3 and I4, respectively. They show that the ratio of infeasibility  depends on the rate of maximum possible variation $\sigma_j=0\%,10\%,20\%,30\%,40\%,50\%$ and the rate of demand variation scenario $\Delta=0, 0.05,\cdots,0.40$. The ratio of infeasibility becomes better as the rate of maximum possible variation increases. For example, when the rate of demand variation $\Delta$ is equal to $0.05$, the ratio of infeasibility can be improved by $62.8\%$ on I3 and $73.8\%$ on I4 when $\sigma_j$ is increased from zero to ten percents, respectively. It means that the robust solution obtained when $\sigma_j = 10\%$ is much better protected against infeasibility compared to the solution with $\sigma_j = 0\%$ (i.e. nominal problem). For the better result, we pay $0.4\%$ and $1.6\%$ additional penalty costs for I3 and I4 compared to the nominal solution without demand uncertainty, respectively. We also report that the additional penalty costs are $5.1\%$, $6.4\%$, $7.6\%$, and $8.8\%$ for I3 and $3.5\%$, $5.0\%$, $7.1\%$, and $9.2\%$ for I4, when $\sigma_j=20\%,30\%,40\%,50\%$, respectively. A similar phenomenon can be observed when the rate of maximum possible variation $\sigma_j$ is changed, compared with varying the degree of robustness $\Gamma_{i}$.

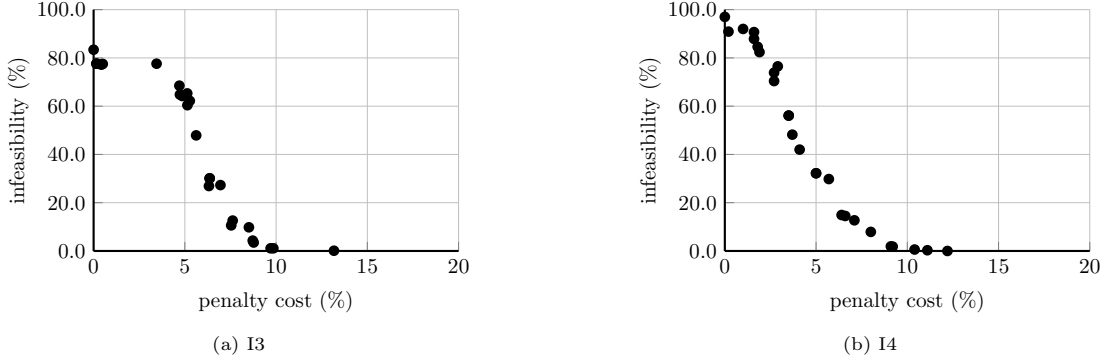
\begin{figure*}
\begin{subfigure}{.5\textwidth}
  \centering
\begin{tikzpicture}[only marks, y=1cm, scale=0.8]
\begin{axis}[   width=6cm,
                height=4cm,
                scale only axis,
                xmin=0, xmax=20, xlabel={penalty cost (\%)},
                xtick={0,5,10,15,20}, xticklabels={0,5,10,15,20},
                xmajorgrids,
                ymin=0, ymax=100, ylabel={infeasibility (\%)},
                ytick={0,20,40,60,80,100},
                yticklabels={0.0,20.0,40.0,60.0,80.0,100.0},
                ymajorgrids,
                axis lines*=left,
                line width=1.0pt,
                mark size=2.0pt,
                legend style={at={(0.9,0.9)},anchor=north east,draw=black,fill=white,align=left}
                ]
\addplot [color=black,mark=*]
                coordinates  {(0,83.4) (0.16,77.6) (0.16,77.6) (0.42,77.2) (3.45,77.6) (4.70,68.5) (0.16,77.6) (4.73,64.8) (5.13,65.3) (5.62,47.9) (6.32,26.9)  (0.50,77.5) (5.27,62.2) (6.36,30.1) (7.54,10.6) (8.51,9.8) (4.86,64.2) (6.36,30.1) (7.62,12.6) (8.72,4.3) (9.70,1.1) (5.14,60.4) (6.95,27.3) (8.77,3.5) (9.85,1.1) (13.17,0.1) };

\end{axis}              
\end{tikzpicture}    
  \caption{I3}
  \label{fig05}
\end{subfigure}
\begin{subfigure}{.5\textwidth}
  \centering
\begin{tikzpicture}[only marks, y=1cm, scale=0.8]
\begin{axis}[   width=6cm,
                height=4cm,
                scale only axis,
                xmin=0, xmax=20, xlabel={penalty cost (\%)},
                xtick={0,5,10,15,20}, xticklabels={0,5,10,15,20},
                xmajorgrids,
                ymin=0, ymax=100, ylabel={infeasibility (\%)},
                ytick={0,20,40,60,80,100},
                yticklabels={0.0,20.0,40.0,60.0,80.0,100.0},
                ymajorgrids,
                axis lines*=left,
                line width=1.0pt,
                mark size=2.0pt,
                legend style={at={(0.9,0.9)},anchor=north east,draw=black,fill=white,align=left}
                ]
\addplot [color=black,mark=*]
                coordinates  {(0,97.0) (0.2,90.9) (1.0,92.0) (1.6,87.9) (1.9,82.4) (2.7,73.9)  (1.6,90.7) (2.7,70.4) (3.5,56.1) (4.1,42.0) (5.7,29.8)  (1.8,84.6) (3.7,48.2) (5.0,32.2) (6.6,14.5) (8.0,7.9)  (2.9,76.5) (5.0,32.2) (7.1,12.7) (9.1,1.9) (10.4,0.6) (3.5,56.1) (6.4,14.9) (9.2,1.8) (11.1,0.3) (12.2,0.0)  };
\end{axis}              
\end{tikzpicture}  
  \caption{I4}
  \label{fig06}
\end{subfigure}
\caption{Relationship between the ratio of infeasibility and the penalty cost, $\sigma_{j} \times \Gamma_{i}$ $\in$ $\{10\%,$ $20\%,$ $30\%,$ $40\%,$ $50\%\}$ $\times$ $\{1,$ $2,$ $3,$ $4,$ $5\}$.}
\label{fig0506}
\end{figure*}

Figure \ref{fig05} and Figure \ref{fig06} illustrate relationship between the ratio of infeasibility and additional penalty costs compared with the nominal solution without demand uncertainty i.e., $\Gamma_i=0$ for I3 and I4, respectively. For each possible pair of $\sigma_i$ and $\Gamma_i$, we obtained a robust solution and evaluated the corresponding ratio of infeasibility and additional penalty cost. Then, we plot the corresponding points in Figure \ref{fig05} and Figure \ref{fig06}. They demonstrate that the ratio of infeasibility and the penalty costs are approximately inversely related.

\begin{figure}
\begin{subfigure}{.5\textwidth}
  \centering
\begin{tikzpicture}[only marks, y=1cm, scale=0.8]
\begin{axis}[   width=6cm,height=5cm,scale only axis,
                xlabel={penalty cost (\%)}, xmin=0, xmax=15, axis x line*=bottom, 
                xtick={0,5,10,15},
                xticklabels={0,5,10,15},
                xmajorgrids,
                ymin=0, ymax=30, ylabel={additional capacity (\%)},
                ytick={0,5,10,15,20,25,30},
                yticklabels={0,5,10,15,20,25,30},
                ymajorgrids,
      axis y line*=left,
       line width=1.0pt, mark size=2.0pt,
                legend style={at={(0.98,0.02)},anchor=south east,draw=black,fill=white,align=left},
                legend pos=north west
                ]
                                
\addlegendentry{$y=1.293x$, $R^2=0.793$};
\addplot [color=black, mark=*]
                coordinates  {(0,0) (0.16,0.0) (0.16,0.0) (0.42,0.0) (3.45,0.0) (4.70,2.5) (0.16,0.0) (4.73,2.5) (5.13,2.5) (5.62,4.1) (6.32,6.9) (0.50,0.0) (5.27,2.5) (6.36,6.9) (7.54,10.1) (8.51,10.3) (4.86,2.5) (6.36,6.9) (7.62,10.1) (8.72,13.3) (9.70,16.6) (5.14,2.5) (6.95,6.9) (8.77,13.3) (9.85,16.6) (13.17,23.0) 
};
\draw [-,semithick,black] (axis cs:0,0) -- (axis cs:20,25.86);  
\end{axis}              
\end{tikzpicture}    
  \caption{I3}
  \label{fig07}
\end{subfigure}
\begin{subfigure}{.5\textwidth}
  \centering
\begin{tikzpicture}[only marks, y=1cm, scale=0.8]
\begin{axis}[   width=6cm,height=5cm,scale only axis,
                xmin=0, xmax=15, xlabel={penalty cost (\%)},
                xtick={0,5,10,15},
                xticklabels={0,5,10,15},
                xmajorgrids,
                ymin=0, ymax=30, ylabel={additional capacity (\%)},
                ytick={0,5,10,15,20,25,30},
                yticklabels={0,5,10,15,20,25,30},
                ymajorgrids,
                axis lines*=left, line width=1.0pt, mark size=2.0pt,
                legend style={at={(0.98,0.02)},anchor=south east,draw=black,fill=white,align=left},
                legend pos=north west
                ]
\addplot [color=black, mark=*]
                coordinates  {(0,0) (0.2,0.0) (1.0,0.0) (1.6,0.0) (1.9,0.0) (2.7,0.0) (1.6,0.0) (2.7,3.0) (3.5,3.0) (4.1,9.3) (5.7,9.3) (1.8,3.0) (3.7,9.3) (5.0,6.3) (6.6,9.3) (8.0,9.3) (2.9,6.3) (5.0,6.3) (7.1,15.0) (9.1,18.1) (10.4,18.1) (3.5,3.0) (6.4,9.3) (9.2,18.1) (11.1,24.4) (12.2,24.4) 

};              
\draw [-,semithick,black] (axis cs:0,0) -- (axis cs:20,35.46);                  
\addlegendentry{$y=1.773x$, $R^2=0.875$};
\end{axis}              
\end{tikzpicture}    
  \caption{I4}
  \label{fig08}
\end{subfigure}
\caption{Relationship between the additional total capacities and the penalty cost, $\sigma_{j} \times \Gamma_{i}$ $\in$ $\{10\%,$ $20\%,$ $30\%,$ $40\%,$ $50\%\}$ $\times$ $\{1,$ $2,$ $3,$ $4,$ $5\}$.}
\label{fig0708}
\end{figure}
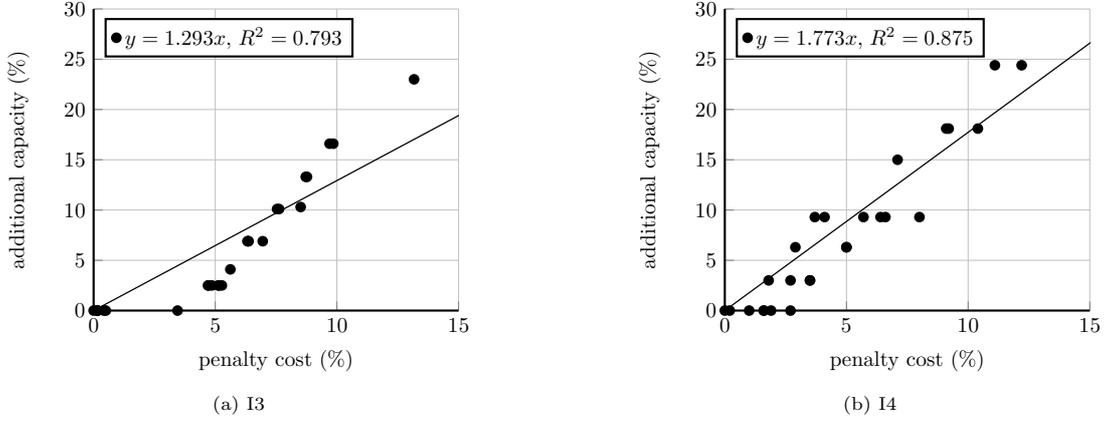

Figure \ref{fig07} and Figure \ref{fig08} illustrate relationship between additional total capacities and the penalty costs in comparison with the nominal solution without demand uncertainty, i.e., $\Gamma_i=0$ for I3 and I4, respectively. A robust solution may need to open additional facilities compared to the nominal solution to cope with uncertain demands. Such additional capacity and additional penalty cost are obtained for each possible pair of $\sigma_i$ and $\Gamma_i$ values, and they are plotted in Figure \ref{fig07} and Figure \ref{fig08}.  From the linear regression with setting the y-intercept at zero, we can see that the rate of additional total capacities is linearly correlated to the rate of penalty costs. The coefficient of determination $R^2$ is equal to 0.793 and 0.875 for I3 and I4, respectively. From this, we can confirm that the additional costs for robust solutions are directly related to the additional capacities.

\section{Conclusion} \label{section6} 

In this paper, we proposed a branch-and-price algorithm for the robust SSCFLP with the cardinality-constrained demand uncertainty set. The algorithm is based on the allocation-based mathematical model induced by the Dantzig-Wolfe decomposition. The pricing subproblem is the robust binary knapsack problem, which can be solved by solving nominal binary knapsack problems at most $n$ times. The computational results show that our proposed algorithm can solve practical instances better than CPLEX, which solves the MIP reformulation of the robust SSCFLP. We also verify that the trade-off between the robustness of the solutions and additional costs empirically by Monte-Carlo simulation studies. 

Further works may be required to improve the branch-and-price algorithm for the robust SSCFLP, and we suggest some of them. Efficient heuristics for the robust SSCFLP will be helpful as primal heuristics for the branch-and-price algorithm. Additionally, an efficient column management technique may help to reduce the size of the restricted master problem. Lastly, it may be worthwhile to adopt some techniques to improve the convergence speed, like the stabilized column generation. Moreover, considering other uncertainty sets of demands, e.g. polyhedral uncertainty set, ellipsoidal uncertainty set, can be interesting subjects for the robust SSCFLP.

\section*{Acknowledgments}

This work was supported by the National Research Foundation of Korea (NRF) Grant funded by the Korea government (MSIT) (No. 2019R1F1A1061361)


\begin{thebibliography}{99}

\bibitem[Ahuja et al.(2004)]{Ahuja04} Ahuja, R. K., Orlin, J. B., Pallottino, S., Scaparra, M. P., and Scutell\'{a}, M. G. (2004). A multi-exchange heuristic for the single-source capacitated facility location problem. {\em Management Science}, 50(6), 749-760.

\bibitem[Albareda-Sambola et al.(2011)]{Albareda11} Albareda-Sambola, M., Fern\'{a}ndez, E., and Saldanha-da-Gama, F. (2011). The facility location problem with Bernoulli demands. {\em Omega}, 39(3), 335-345.

\bibitem[Baron et al.(2011)]{Baron11} Baron, O., Milner, J., and Naseraldin, H. (2011). Facility location: A robust optimization approach. {\em Production and Operations Management}, 20(5), 772-785.	
 
\bibitem[Baron et al.(2019)]{Baron19} Baron, O., Berman, O., Fazel-Zarandi, M. M., and Roshanaei, V. (2019). Almost robust discrete optimization.  {\em European Journal of Operational Research, 276(2)}, 451-465.

\bibitem[Barcel\'{o} and Casanovas(1984)]{Barcelo84} Barcel\'{o}, J., and Casanovas, J. (1984). A heuristic Lagrangean algorithm for the capacitated plant location problem. {\em European Journal of Operational Research}, 15(2), 212-226.

\bibitem[Barnhart et al.(1998)]{Barnhart98} Barnhart, C., Johnson, E. L., Nemhauser, G. L., Savelsbergh, M. W., and Vance, P. H. (1998). Branch-and-price: Column generation for solving huge integer programs. {\em Operations Research}, 46(3), 316-329.

\bibitem[Beasley(1993)]{Beasley93} Beasley, J. E. (1993). Lagrangean heuristics for location problems. {\em European Journal of Operational Research}, 65(3), 383-399.

\bibitem[Ben-Tal and Nemirovski(1998)]{Bental98} Ben-Tal, A., and Nemirovski, A. (1998). Robust convex optimization. {\em Mathematics of operations research},  23(4), 769-805.

\bibitem[Ben-Tal and Nemirovski(2000)]{Bental00} Ben-Tal, A., and Nemirovski, A. (2000). Robust solutions of linear programming problems contaminated with uncertain data. {\em Mathematical programming}, 88(3), 411-424.

\bibitem[Beraldi et al.(2004)] {Beraldi04} Beraldi, P., Bruni, M. E., and Conforti, D. (2004). Designing robust emergency medical service via stochastic programming. {\em European Journal of Operational Research}, 158(1), 183-193.

\bibitem[Bertsimas and Sim(2003)]{Bertsimas03} Bertsimas, D., and Sim, M. (2003). Robust discrete optimization and network flows. {\em Mathematical Programming}, 98(1), 49-71.

\bibitem[Bertsimas and Sim(2004)]{Bertsimas04} Bertsimas, D., and Sim, M. (2004). The price of robustness. {\em Operations Research}, 52(1), 35-53.

\bibitem[Bieniek(2015)]{Bieniek15} Bieniek, M. (2015). A note on the facility location problem with stochastic demands. {\em Omega}, 55, 53-60.

\bibitem[Ceselli and Righini(2005)]{Ceselli05} Ceselli, A., and Righini, G. (2005). A branch-and-price algorithm for the capacitated p-median problem. {\em Networks}, 45(3), 125-142.
 
\bibitem[Chen and Ting(2008)]{Chen08} Chen, C. H., and Ting, C. J. (2008). Combining lagrangian heuristic and ant colony system to solve the single source capacitated facility location problem. {\em Transportation Research part E: Logistics and Transportation Review}, 44(6), 1099-1122.

\bibitem[Contreras and D\'{i}az(2008)]{Contreras08} Contreras, I. A., and D\'{i}az, J. A. (2008). Scatter search for the single source capacitated facility location problem. {\em Annals of Operations Research}, 157(1), 73-89.

\bibitem[Cornu\'{e}jols et al.(1991)]{Cornuejols91} Cornu\'{e}jols, G., Sridharan, R., and Thizy, J. M. (1991). A comparison of heuristics and relaxations for the capacitated plant location problem. {\em European Journal of Operational Research}, 50(3), 280-297.

\bibitem[Cortinhal and Captivo(2003)]{Cortinhal03} Cortinhal, M. J., and Captivo, M. E. (2003). Upper and lower bounds for the single source capacitated location problem. {\em European Journal of Operational Research}, 151(2), 333-351. 

\bibitem[D\'{i}az and Fern\'{a}ndez(2002)]{Diaz02} D\'{i}az, J. A., and Fern\'{a}ndez, E. (2002). A branch-and-price algorithm for the single source capacitated plant location problem. {\em Journal of the Operational Research Society}, 53(7), 728-740.

\bibitem[De Aragao and Uchoa(2003)]{Dearagao03} De Aragao, M. P., and Uchoa, E. (2003). Integer program reformulation for robust branch-and-cut-and-price algorithms. {\em In Mathematical program in rio: a conference in honour of nelson maculan} (pp. 56-61).

\bibitem[Delmaire et al.(1999)]{Delmaire99} Delmaire, H., D\'{i}az, J. A., Fern\'{a}ndez, E., and Ortega, M. (1999). Reactive GRASP and tabu search based heuristics for the single source capacitated plant location problem. {\em INFOR: Information Systems and Operational Research}, 37(3), 194-225.

\bibitem[Desrosiers and L\"{u}bbecke(2010)]{Desrosiers10} Desrosiers, J., and Lübbecke, M. E. (2010). Branch-price-and-cut algorithm. {\em Wiley encyclopedia of operations research and management science.}

\bibitem[Fukasawa et al.(2006)]{Fukasawa06}  Fukasawa, R., Longo, H., Lysgaard, J., De Aragão, M. P., Reis, M., Uchoa, E., and Werneck, R. F. (2006). Robust branch-and-cut-and-price for the capacitated vehicle routing problem. {\em Mathematical programming}, 106(3), 491-511.

\bibitem[Gadegaard et al.(2018)]{Gadegaard18} Gadegaard, S. L., Klose, A., and Nielsen, L. R. (2018). An improved cut-and-solve algorithm for the single-source capacitated facility location problem. {\em EURO Journal on Computational Optimization}, 6(1), 1-27.

\bibitem[Gamrath(2010)] {Gamrath10} Gamrath, G. (2010). Generic branch-cut-and-price.

\bibitem[Guastaroba and Speranza(2014)]{Guastaroba14} Guastaroba, G., and Speranza, M. G. (2014). A heuristic for BILP problems: the single source capacitated facility location problem. {\em European Journal of Operational Research}, 238(2), 438-450.

\bibitem[G\"{u}lpınar et al.(2013)]{Gulpinar13} G\"{u}lpınar, N., Pachamanova, D., and \c{C}anako\u{g}lu, E. (2013). Robust strategies for facility location under uncertainty. {\em European Journal of Operational Research}, 225(1), 21-35.

\bibitem[Hindi and Pie\'{n}kosz(1999)] {Hindi99} Hindi, K. S., and Pie\'{n}kosz, K. (1999). Efficient solution of large scale, single-source, capacitated plant location problems. {\em Journal of the Operational Research Society}, 50(3), 268-274.

\bibitem[Holmberg et al.(1999)] {Holmberg99} Holmberg, K., R\"{o}nnqvist, M., and Yuan, D. (1999). An exact algorithm for the capacitated facility location problems with single sourcing. {\em European Journal of Operational Research}, 113(3), 544-559.
	
\bibitem[Klincewicz and Luss(1986)] {Klincewicz86} Klincewicz, J. G., and Luss, H. (1986). A Lagrangian relaxation heuristic for capacitated facility location with single-source constraints. {\em Journal of the Operational Research Society}, 37(5), 495-500.

\bibitem[Klose and G\"{o}rtz(2007)]{Klose07} Klose, A., and G\"{o}rtz, S. (2007). A branch-and-price algorithm for the capacitated facility location problem. {\em European Journal of Operational Research}, 179(3), 1109-1125.

\bibitem[Laporte et al.(1994)]{Laporte94} Laporte, G., Louveaux, F. V., and van Hamme, L. (1994). Exact solution to a location problem with stochastic demands. {\em Transportation Science}, 28(2), 95-103.

\bibitem[Lee et al.(2012)]{Lee12} Lee, C., Lee, K., Park, K., and Park, S. (2012). Technical note-branch-and-price-and-cut approach to the robust network design problem without flow bifurcations. {\em Operations Research}, 60(3), 604-610.

\bibitem[Lee et al.(2012)]{Lee12_1} Lee, C., Lee, K., and Park, S. (2012). Robust vehicle routing problem with deadlines and travel time/demand uncertainty. {\em Journal of the Operational Research Society}, 63(9), 1294-1306.

\bibitem[Lin(2009)]{Lin09} Lin, C. K. Y. (2009). Stochastic single-source capacitated facility location model with service level requirements. {\em International Journal of Production Economics}, 117(2), 439-451.

\bibitem[Martello et al.(1999)]{Martello99} Martello, S., Pisinger, D., and Toth, P. (1999). Dynamic programming and strong bounds for the 0-1 knapsack problem. {\em Management Science}, 45(3), 414-424.

\bibitem[Monaci et al.(2013)]{Monaci13} Monaci, M., Pferschy, U., and Serafini, P. (2013). Exact solution of the robust knapsack problem. {\em Computers \& Operations Research}, 40(11), 2625-2631. 

\bibitem[Neebe and Rao(1983)]{Neebe83} Neebe, A. W., and Rao, M. R. (1983). An algorithm for the fixed-charge assigning users to sources problem. {\em Journal of the Operational Research Society}, 34(11), 1107-1113.
 
\bibitem[Owen and Daskin(1998)]{Owen98} Owen, S. H., and Daskin, M. S. (1998). Strategic facility location: A review. {\em European journal of operational research}, 111(3), 423-447.

\bibitem[Pirkul(1987)]{Pirkul87} Pirkul, H. (1987). Efficient algorithms for the capacitated concentrator location problem. {\em Computers \& Operations Research}, 14(3), 197-208.

\bibitem[Pisinger(1997)]{Pisinger97} Pisinger, D. (1997). A minimal algorithm for the 0-1 knapsack problem. {\em Operations Research}, 45(5), 758-767.

\bibitem[R\"{o}nnqvist et al.(1999)]{Ronnqvist99} R\"{o}nnqvist, M., Tragantalerngsak, S., and Holt, J. (1999). A repeated matching heuristic for the single-source capacitated facility location problem. {\em European Journal of Operational Research}, 116(1), 51-68.

\bibitem[Savelsbergh(1997)]{Savelsbergh97} Savelsbergh, M. (1997). A branch-and-price algorithm for the generalized assignment problem. {\em Operations Research}, 45(6), 831-841.

\bibitem[Snyder(2006)]{Snyder06} Snyder, L. V. (2006). Facility location under uncertainty: a review. {\em IIE Transactions}, 38(7), 547-564.

\bibitem[Snyder and Daskin(2006)] {Snyder06_2} Snyder, L. V., and Daskin, M. S. (2006). Stochastic p-robust location problems. {\em IIE Transactions}, 38(11), 971-985.

\bibitem[Soyster(1973)] {Soyster73} Soyster, A. L. (1973). Convex programming with set-inclusive constraints and applications to inexact linear programming {\em Operations Research}, 21(5), 1154-1157.

\bibitem[Sridharan (1993)] {Sridharan93} Sridharan, R. (1993). A Lagrangian heuristic for the capacitated plant location problem with single source constraints. {\em European Journal of Operational Research}, 66(3), 305-312.

\bibitem[Tragantalerngsak et al.(2000)]{Tragantalerngsak00} Tragantalerngsak, S., Holt, J., and R\"{o}nnqvist, M. (2000). An exact method for the two-echelon, single-source, capacitated facility location problem. {\em European Journal of Operational Research}, 123(3), 473-489.

\bibitem[Yang et al.(2012)] {Yang12} Yang, Z., Chu, F., and Chen, H. (2012). A cut-and-solve based algorithm for the single-source capacitated facility location problem. {\em European Journal of Operational Research}, 221(3), 521-532.

\end{thebibliography}
\end{document}